\newcommand{\mesh}{{\mathcal T}}
\newcommand{\real}{{\mathbb{R}}}
\newcommand{\complex}{{\mathbb{C}}}
\renewcommand{\H}[1]{H^{#1}(\Gamma)}
\newcommand{\Hb}[1]{\bm H^{#1}(\Gamma)}
\newcommand{\Hpara}[1]{\bm H^{#1\frac12}_{\parallel}(\Gamma)}
\newcommand{\Hparas}[1]{\bm H^{#1}_{\parallel}(\Gamma)}
\newcommand{\Hortho}[1]{\bm H^{#1\frac12}_{\bot}(\Gamma)}
\newcommand{\Horthos}[1]{\bm H^{#1}_{\bot}(\Gamma)}
\newcommand{\Hdiv}{\bm H^{-\frac12}_{{\rm div}}(\Gamma)}
\newcommand{\Hdivp}[1]{\bm H^{#1}_{{\rm div}}(\Gamma)}
\newcommand{\Hcurl}{\bm H^{-\frac12}_{{\rm curl}}(\Gamma)}
\newcommand{\HcurlO}{\bm H({\curl},\Omega)}
\newcommand{\Hone}{\bm H^1(\Omega)}
\newcommand{\Lt}{\bm L_t^2(\Gamma)}
\newcommand{\lt}{L^2(\Gamma)}
\newcommand{\RT}{\bm {RT}_0}
\renewcommand{\div}{{\rm div}_\Gamma}
\newcommand{\grad}{{\bf grad}_\Gamma}
\newcommand{\gradx}{{\bf grad}_{\Gamma,\x}}
\newcommand{\curl}{\boldsymbol{\mathrm{curl}} \; }
\newcommand{\curlv}{{\bf curl}_\Gamma}
\newcommand{\curls}{{\rm curl}_\Gamma}
\newcommand{\Ak}{\bm A_k}
\newcommand{\Vk}{V_k}
\newcommand{\Gk}{G_k}
\newcommand{\Ipsi}{{\mathbf I}_{\mesh}}
\newcommand{\Ialpha}{ {\rm I}_{\mesh}}
\newcommand{\SLPs}{\Psi^V_k}
\newcommand{\SLPv}{\bm \Psi^A_k}
\newcommand{\nHdiv}[1]{\|#1\|_{\bm H^{-\frac12}_{\rm div}(\Gamma)}}
\newcommand{\nHcurl}[1]{\|#1\|_{\bm H^{-\frac12}_{\rm curl}(\Gamma)}}
\newcommand{\nHortho}[2]{\|#2\|_{\bm H^{#1\frac12}_{\bot}(\Gamma)}}
\newcommand{\nHpara}[2]{\|#2\|_{\bm H^{#1\frac12}_{\parallel}(\Gamma)}}
\newcommand{\nH}[2]{\|#2\|_{H^{#1}(\Gamma)}}
\newcommand{\nHb}[2]{\|#2\|_{\bm H^{#1}(\Gamma)}}
\newcommand{\nLt}[2]{\|#1\|_{\bm L^2({#2})}}
\newcommand{\nlt}[2]{\|#1\|_{L^2({#2})}}
\newcommand{\para}[2]{\langle #1, #2\rangle_{\parallel,\Gamma}}
\newcommand{\half}[2]{\langle  #1, #2\rangle_{\frac12,\Gamma}}
\newcommand{\threetwo}[2]{\langle  #1, #2\rangle_{\frac32,\Gamma}}
\newcommand{\ortho}[2]{\langle  #1, #2\rangle_{\bot,\Gamma}}
\newcommand{\po}[2]{~\hspace{-0.2cm}_\bot\!\langle #1, #2\rangle_{\parallel}}
\renewcommand{\u}{\bm u}
\renewcommand{\v}{\bm v}
\newcommand{\w}{\bm w}
\newcommand{\uh}{\bm U}
\newcommand{\vh}{\bm V}
\newcommand{\n}{\bm n}
\newcommand{\x}{\bm x}
\newcommand{\y}{\bm y}
\newcommand{\f}{\bm f}
\newcommand{\Psiv}{\bm \Psi}
\newcommand{\Psivm}{\bm \Psi_\mesh}
\newcommand{\alpham}{\alpha_\mesh}
\newcommand{\dPsiv}{\delta\hspace{-1.2pt}\bm \Psi}
\newcommand{\dalpha}{\delta\hspace{-1.2pt}\alpha}
\newcommand{\Res}{\bm R}
\newcommand{\cres}{r}
\newcommand{\LRes}{\Res_{\ka}}
\newcommand{\Lcres}{\cres_\ka}
\newcommand{\LResG}{\Res_0}
\newcommand{\LcresG}{\cres_0}
\newcommand{\h}{{\tt h}}
\newcommand{\calI}{\mathcal I}
\newcommand{\edges}{\mathcal E_\mesh}
\newcommand{\nodes}[1]{\mathcal N(#1)}
\newcommand{\set}[2]{\left\{ #1\,\middle|\,#2\right\}}
\newcommand{\gtrace}{\bm\gamma_{\perp}}
\newcommand{\ptrace}{\bm\gamma_{\hspace{2pt}\parallel}}
\newcommand{\Rortho}{\bm R_{\perp}}
\newcommand{\Rpara}{\bm R_{\parallel}}
\newcommand{\Rtrace}{\bm R_{\bm\gamma}}
\newcommand{\tperp}{\bm t_{\perp}}
\newcommand{\ka}{T}
\newcommand{\de}{\,{\rm ds}}
\newcommand{\ds}{\,{\rm d}\sigma}
\newcommand{\dO}{\,{\rm d}\Omega}
\newcommand{\VP}{\mathbb V(\mesh)}
\newtheorem{theorem}{Theorem}[section]
\newtheorem{lemma}[theorem]{Lemma}
\newtheorem{corollary}[theorem]{Corollary}
\newtheorem{proposition}[theorem]{Proposition}
\newtheorem{definition}[theorem]{Definition}
\newtheorem{remark}[theorem]{Remark}
\newcommand{\bnew}[1]{{{#1}}}
\newcommand{\rnew}[1]{{{#1}}}
\newcommand{\eT}{\mathcal E(T)}
\newcommand{\step}[1]{\noindent\raisebox{1.5pt}[10pt][0pt]{\tiny\framebox{$#1$}}\xspace}
\author{R. H. Nochetto}
\address{R. H. Nochetto, Department of Mathematics, University of Maryland, College Park MD 20742 ({\tt rhn@math.umd.edu})}
\author{B. Stamm}
\address{B. Stamm, Department of Mathematics, University of California, Berkeley and Mathematics Department, Lawrence Berkeley National Laboratory, Berkeley 94720 ({\tt stamm@math.berkeley.edu})}
\title[A posteriori error estimates for EFIE on polyhedra]{A posteriori error estimates for the Electric Field Integral Equation on polyhedra}
\begin{document}
\maketitle
\begin{abstract}
We present a residual-based a posteriori error estimate for the
Electric Field Integral Equation (EFIE) on a bounded polyhedron $\Omega$
with boundary $\Gamma$. The EFIE is a variational equation formulated in 
${\bm H^{-1/2}_{{\rm div}}(\Gamma)}$.
We express the estimate in terms of $L^2$-computable quantities 
and derive global lower and upper bounds (up to oscillation terms).
\end{abstract}

\section{Introduction}
The Electric Field Integral Equation (EFIE) describes the scattering
of electromagnetic waves on a perfectly conducting obstacle $\Omega$
with surface $\Gamma$, in our case a polyhedron. Assuming a 
time-harmonic dependence, the Stratton-Chu
representation formula expresses the electric field $E$ 
in terms of a surface potential as
\[
E(\x) = E^{inc}(\x) + \int_\Gamma \left(
	\Gk(\x,\y)\u(\y)
	+ \frac{1}{k^2}
	\gradx\Gk(\x,\y) \div \u(\y)
	\right)\ds(\y),
\]
where $k$ denotes the wave-number and
$E^{inc}(\x) $ is the given incident wave that is scattered on $\Gamma$.
Invoking the boundary condition that the tangential component of the
total electric field $E$ vanishes on the surface $\Gamma$,
as corresponds to $\Omega$ being
perfectly conducting, the EFIE consists of seeking the surface current
$\u\in\Hdiv$ such that for all $\x\in\Gamma$
\[
	\int_\Gamma \left(
	\Gk(\x,\y)\u(\y)
	+ \frac{1}{k^2}
	\gradx\Gk(\x,\y) \div \u(\y)
	\right)\ds(\y)
	= -\ptrace(E^{inc})(\x),
\]  
where $\Hdiv$ is the space of traces of $\HcurlO$ functions that are
rotated by a right angle on the surface and $\ptrace$ denotes the
tangential trace onto $\Gamma$.

Computing approximations of the EFIE by means of the Boundary Element
Method (BEM), namely using a Galerkin approach based on the
variational formulation of the EFIE, is expensive due to the dense
matrix structure of the ensuing linear system. 
Although fast techniques such as the Fast Multipole Method exist,
c.f. \cite{Greengard:1987p5362} as an example of a first work in this
field, it is still crucial to 
locate the degrees of freedom efficiently, namely in regions of low
regularity of the solution $\u$.

Since $\u\in\Hdiv$, $\u$ exhibits in general rather low regularity and,
as a consequence, {\it a priori} estimates show extremely low
convergence rates for quasi-uniform mesh refinements;
see \cite{Christiansen:2004p270,Hiptmair:2002p479}.
In contrast,  
adaptive refinement techniques, based on {\it a posteriori} error
estimates, exploit much weaker regularity of $\u$ in a nonlinear 
Sobolev scale and allow for optimal error decay in terms of degrees
of freedom in situations where quasi-uniform meshes are suboptimal.
The design and analysis of a posteriori error estimators is, however,
problem dependent; we refer to \cite{Cascon:2008p964,NoSiVe:09} for an
account of the theory of adaptive finite element methods in the energy norm 
for linear second order elliptic \rnew{partial differential} equations in polyhedra.

\rnew{The a posteriori error analysis and corresponding theory of adaptive mesh
  refinements for BEM is much less developed
and an overview of different approaches for the former is given 
in \cite{Carstensen:2001p5368,Erath:2009p5528,Nochetto:2010p6914}.
It seems that this is the first contribution with specific application
to electromagnetic scattering problems on polyhedra.}

\bnew{For integral equations, additional difficulties arise since 
the residual typically lies in a Sobolev space with fractional index that is possibly also negative, as in the present case. 
Since such norms are not computable in practice, this imposes additional challenges to the residual based approach of a posteriori error estimates.}

In this paper we develop nevertheless a residual based a posteriori error 
estimator for the EFIE on polyhedra, and prove upper and lower global
bounds. Residual based estimators are especially attractive due to their
simplicity of derivation and computation, but they involve 
interpolation constants which can at best be estimated. Alternative
estimators have been proposed, mostly for elliptic problems defined in
$\Omega$, at the expense of their simplicity; we believe that our 
approach can be extended to those estimators as well.
We derive computable $L^2$--integrable quantities to estimate the
error of the BEM measured in the $\Hdiv$ norm, which is
the natural norm for EFIE.
We therefore avoid evaluating fractional Sobolev norms.

For proving well-posedness of the exact solution and developing {\it a
  priori } error estimates it is important to decompose both the exact
solution and test function using a Helmholtz
decomposition \cite{Buffa:2003p483,Hiptmair:2002p479}.
In contrast, to derive {\it a posteriori } error estimates, it
is crucial to decompose the {\it test } function according to
a regular decomposition which extends the Helmholtz
decomposition; see \cite{CaNoSi:07} for $H(\text{div};\Omega)$.

This paper is organized as follows. In Section \ref{sec:fsado} we
recall the necessary functional analysis in order to derive a
posteriori error estimates for the EFIE 
\cite{Buffa:2001p1125,Buffa:2001p1910,Buffa:2002p368,Buffa:2002p1911}.
We also present and study a Cl\'ement type interpolation
operator for the Raviart-Thomas space, based on ideas from 
\cite{Bernardi:2007p3092}.
We discuss the EFIE integral equation in Section \ref{sec:ps},
and derive global upper and lower a posteriori error estimates in Section
\ref{sec:apost}. Section
\ref{sec:conc} is finally left for conclusions.

\section{Functional Spaces and Differential Operators}
\label{sec:fsado}
The functional analysis framework developed in \cite{Buffa:2001p1125,Buffa:2001p1910} will be used in this work. In this section we give a short introduction to the functional spaces and differential operators used in the following sections. However, for a detailed and thorough overview we refer to \cite{Buffa:2001p1125,Buffa:2001p1910,Buffa:2002p1911,Buffa:2003p483}. References \cite{Buffa:2002p1911,Buffa:2003p483} deal with non-smooth Lipschitz surfaces, thus the theory is also valid for polyhedra, and covers therefore a more general framework. However, we restrict our theory to polyhedral surfaces.

\subsection{Spaces, norms and trace operators}
Let $\Omega$ be a bounded polyhedron in $\real^3$, and denote its
boundary by $\Gamma$ and its different faces by $\Gamma_j$,
$j=1,\ldots,N_F$. The exterior part $\Omega^+$ is defined by
$\Omega^+=\real^3\backslash \overline{\Omega}$. Let $\n(\x)$,
$\x\in\Gamma$, denote the outer unit normal to the surface $\Gamma$,
which is piecewise constant on $\Gamma$. We also indicate by
$e_{ij}=\partial\Gamma_i\cap\partial\Gamma_j$ the {\it edges} of
$\Gamma$ and by $\bm\tau_{ij}$ the unit vectors parallel to $e_{ij}$, 
with its orientation fixed but arbitrary.
If $\n_i = \n|_{\Gamma_i}$, we further define
\[
\bm\tau_i=\bm\tau_{ij}\times\n_i,
\quad
\bm\tau_j=\bm\tau_{ij}\times\n_j
\]
to be unit vectors lying on the supporting planes of $\Gamma_i$ and
$\Gamma_j$; see Figure \ref{fig:Gamma} for an illustration.

\begin{figure}[h!]
\centering
   \def\svgwidth{0.5\textwidth}
     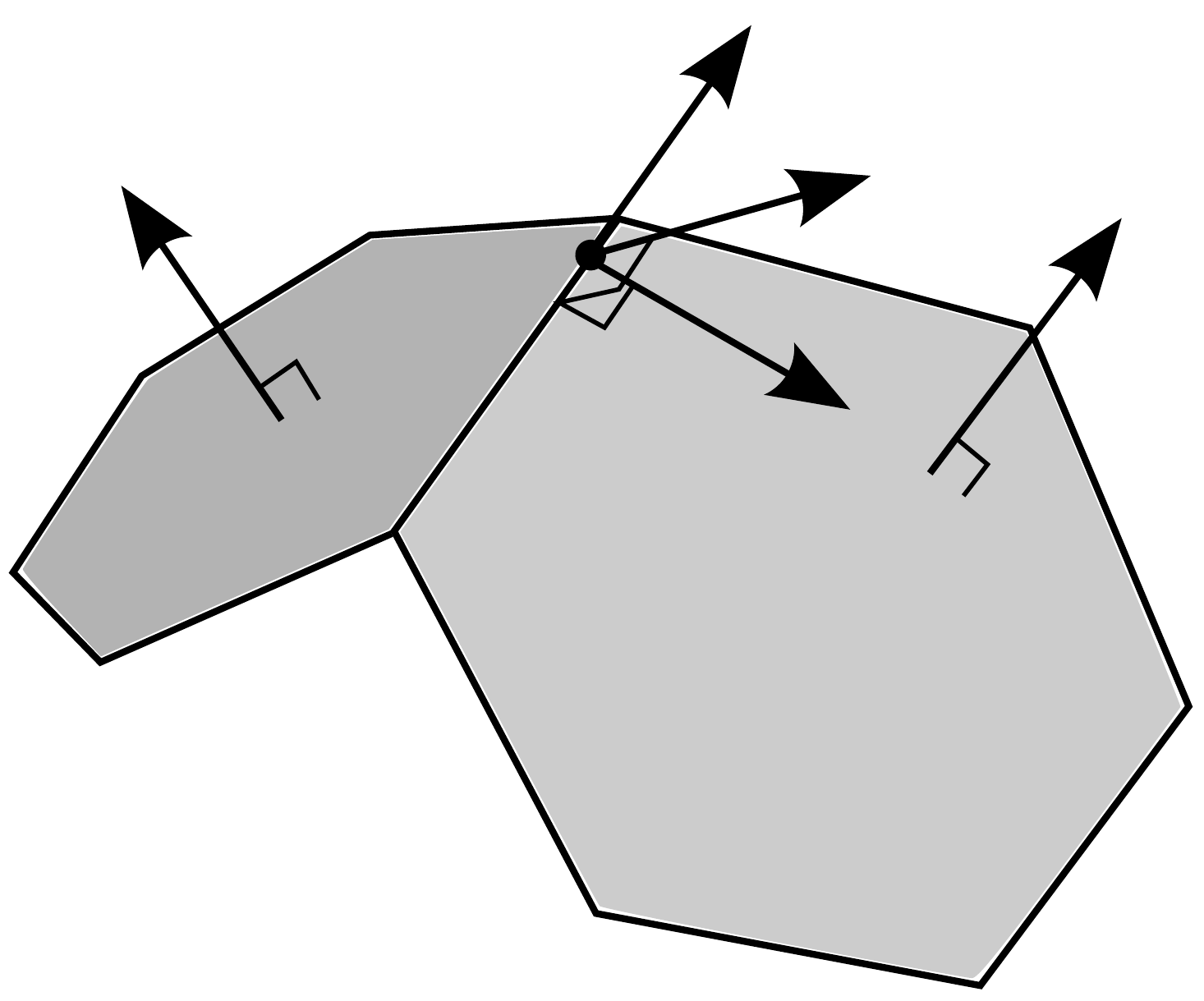
	\caption{Local coordinate systems around an edge
          $e_{ij}=\partial\Gamma_i\cap\partial\Gamma_j$.}
	\label{fig:Gamma}
\end{figure}

On $\Gamma$, we define the space of square integrable tangential fields
\[ 
	\Lt = \{\v\in [L^2(\Gamma)]^3 \,|\,\v\cdot \n=0\quad \rm{a.e.}\}.
\]
Moreover, we let $H^s(\Gamma)$ and $\Hb{s} = [H^s(\Gamma)]^3$, with
$s\in[-1,1]$, denote the standard Sobolev spaces of complex-valued
scalar and vector-valued functions on $\Gamma$ and denote their norms
by $\nH{s}{\cdot}$ and $\nHb{s}{\cdot}$, respectively;
for negative Sobolev indices the norms are defined by
duality. Furthermore, for $s\in(0,1)$, we denote by %
%
\[
\gamma:H^{s+\frac12}(\Omega)\rightarrow H^{s}(\Gamma),
\quad
\bm\gamma:[H^{s+\frac12}(\Omega)]^3\rightarrow \Hb{s}
\]
the standard continuous trace operators, and by
$R_\gamma$ and $\Rtrace$ their continuous right inverses.


For complex-valued vector functions we introduce the facewise $\bm
H^\frac12$-broken space
\[
	\bm H^\frac12_-(\Gamma) = \{\v\in\Lt \,|\,\v_{|\Gamma_i}\in \bm H^\frac12(\Gamma_i),\,1\le i \le N_F\}.
\]
with corresponding norm
\[
	\|\v\|^2_{\bm H_-^\frac12(\Gamma)}
	=\sum_{j=1}^{N_F} \|\v\|^2_{\bm H^\frac12(\Gamma_j)}.
\]
Moreover, we define the spaces
\begin{equation}\label{spaces}
\begin{aligned}
	\Hparas{\frac12} &= \set{\v\in \bm H^\frac12_-(\Gamma)}{\v_{|\Gamma_i}\cdot\bm\tau_{ij} \stackrel{1/2}{=} \v_{|\Gamma_j}\cdot\bm\tau_{ij},\text{ for every edge }e_{ij}},\\
	\Horthos{\frac12} &= \set{\v\in \bm H^\frac12_-(\Gamma)}{\v_{|\Gamma_i}\cdot\bm\tau_{i}\stackrel{1/2}{=}\v_{|\Gamma_j}\cdot\bm\tau_{j},\text{ for every edge }e_{ij}},
\end{aligned}
\end{equation}
where the relation $\stackrel{1/2}{=}$ is understood in the sense that
\begin{equation}
\label{eq:half_rel}
	v_i \stackrel{1/2}{=} v_j\quad\Leftrightarrow\quad 
	\int_{\Gamma_i}\int_{\Gamma_j}
	\frac{|v_i(\x)-v_j(\y)|^2}{\|\x-\y\|^3} \ds(\x) \ds(\y)<\infty.
\end{equation}
We further define
\begin{align*}
	\mathcal N^\parallel_{ij}(\v) &:= \int_{\Gamma_i}\int_{\Gamma_j}
	\frac{|(\v_{|\Gamma_i}\cdot\bm\tau_{ij})(\x)-(\v_{|\Gamma_j}\cdot\bm\tau_{ij})(\y)|^2}{\|\x-\y\|^3} \ds(\x) \ds(\y),\\
	\mathcal N^\perp_{ij}(\v) &:= \int_{\Gamma_i}\int_{\Gamma_j}
	\frac{|(\v_{|\Gamma_i}\cdot\bm\tau_{i})(\x)-(\v_{|\Gamma_j}\cdot\bm\tau_{j})(\y)|^2}{\|\x-\y\|^3} \ds(\x) \ds(\y),
\end{align*}
for each edge $e_{ij}$ of the polyhedron and denote by $\mathcal I_j$ the set of indices $i$ such that $\Gamma_j$ and $\Gamma_i$ have a common edge $e_{ij}$. 

\begin{proposition}[{\cite[Prop. 2.6]{Buffa:2001p1125}}]\label{P:H-1/2}
The spaces $\Hparas{\frac12}$ and $\Horthos{\frac12}$ are Hilbert spaces when endowed with the norms
\begin{align}
	\label{eq:paranorm}
	\nHpara{}{\v}^2 := \|\v\|^2_{\bm H_-^\frac12(\Gamma)} + \sum_{j=1}^{N_F}\sum_{i\in\mathcal I_j}\mathcal N^\parallel_{ij}(\v),\\
	\label{eq:orthonorm}
	\nHortho{}{\v}^2 := \|\v\|^2_{\bm H_-^\frac12(\Gamma)} + \sum_{j=1}^{N_F}\sum_{i\in\mathcal I_j}\mathcal N^\perp_{ij}(\v).
\end{align}
\end{proposition}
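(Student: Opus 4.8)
The plan is to produce an inner product whose induced norm is $\nHpara{}{\cdot}$ (and, in exactly the same way, one for $\nHortho{}{\cdot}$, with the common tangent $\bm\tau_{ij}$ replaced by the pair $\bm\tau_i,\bm\tau_j$ on the two faces), and then to check completeness directly. I would only discuss $\Hparas{\frac12}$, since $\Horthos{\frac12}$ is treated identically.

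\emph{Hilbert structure.} First I would observe that for each edge $e_{ij}$ the ``jump'' map $L_{ij}:\v\mapsto d^{ij}_\v$, where $d^{ij}_\v(\x,\y):=(\v_{|\Gamma_i}\cdot\bm\tau_{ij})(\x)-(\v_{|\Gamma_j}\cdot\bm\tau_{ij})(\y)$, is linear, and that $\mathcal N^\parallel_{ij}(\v)=\|d^{ij}_\v\|^2$ measured in the weighted space $L^2\bigl(\Gamma_i\times\Gamma_j;\|\x-\y\|^{-3}\bigr)$. Hence $b^\parallel_{ij}(\u,\v):=\langle d^{ij}_\u,d^{ij}_\v\rangle$ is a Hermitian, positive semidefinite sesquilinear form, and it is finite on $\Hparas{\frac12}\times\Hparas{\frac12}$ by Cauchy--Schwarz, since there both jumps lie in the weighted space. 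Adding these forms to the inner product of the Hilbert space $\bm H^\frac12_-(\Gamma)=\bigoplus_{j}\bm H^\frac12(\Gamma_j)$ yields $(\u,\v)_\parallel:=(\u,\v)_{\bm H^\frac12_-(\Gamma)}+\sum_{j=1}^{N_F}\sum_{i\in\mathcal I_j}b^\parallel_{ij}(\u,\v)$, which is Hermitian sesquilinear, satisfies $(\v,\v)_\parallel=\nHpara{}{\v}^2$, and is positive definite because $(\v,\v)_\parallel=0$ already forces $\|\v\|_{\bm H^\frac12_-(\Gamma)}=0$. Thus $\Hparas{\frac12}$ equipped with $(\cdot,\cdot)_\parallel$ is a pre-Hilbert space, and the only remaining task is completeness.

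\emph{Completeness.} Given a sequence $(\v_n)$ Cauchy for $\nHpara{}{\cdot}$, it is a fortiori Cauchy in the complete space $\bm H^\frac12_-(\Gamma)$, so $\v_n\to\v$ there, with $\v\in\Lt$ and $\v_{|\Gamma_j}\in\bm H^\frac12(\Gamma_j)$. Fixing an edge $e_{ij}$, the numbers $\mathcal N^\parallel_{ij}(\v_n-\v_m)=\|d^{ij}_{\v_n}-d^{ij}_{\v_m}\|^2$ are Cauchy, hence $d^{ij}_{\v_n}$ converges to some $d$ in the weighted space; on the other hand $\bm H^\frac12(\Gamma_i)\hookrightarrow\bm L^2(\Gamma_i)$ gives $\v_{n|\Gamma_i}\cdot\bm\tau_{ij}\to\v_{|\Gamma_i}\cdot\bm\tau_{ij}$ in $\bm L^2(\Gamma_i)$ and similarly on $\Gamma_j$, so along a subsequence $d^{ij}_{\v_n}\to d^{ij}_\v$ a.e.\ on $\Gamma_i\times\Gamma_j$. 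Identifying the two limits gives $d=d^{ij}_\v$, whence $\mathcal N^\parallel_{ij}(\v)<\infty$ and $\mathcal N^\parallel_{ij}(\v_n-\v)\to0$. Summing over the finitely many edges shows $\v\in\Hparas{\frac12}$ and $\nHpara{}{\v_n-\v}\to0$.

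\emph{Main obstacle.} The delicate step is completeness. Because the matching relations $\stackrel{1/2}{=}$ are not preserved under convergence in $\bm H^\frac12_-(\Gamma)$, the set $\Hparas{\frac12}$ is \emph{not} a closed subspace of $\bm H^\frac12_-(\Gamma)$, so one cannot simply invoke closedness; the heart of the argument is instead the edge-by-edge identification of the weighted-$L^2$ limit of the jump sequence with the jump of the $\bm H^\frac12_-$-limit. The subsequence/a.e.\ reasoning above (or, alternatively, Fatou's lemma, bounding $\mathcal N^\parallel_{ij}(\v)$ and $\mathcal N^\parallel_{ij}(\v_n-\v)$ by the corresponding $\liminf$) supplies exactly this, and the finiteness of the edge set turns the per-edge control into a single estimate. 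The complete argument is given in \cite[Prop.~2.6]{Buffa:2001p1125}.
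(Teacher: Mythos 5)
Your argument is correct, but note that the paper itself offers no proof of this statement: Proposition \ref{P:H-1/2} is quoted verbatim from Buffa--Ciarlet \cite[Prop.~2.6]{Buffa:2001p1125}, so there is nothing in the text to compare against line by line. Your two steps are exactly the standard route: realizing each $\mathcal N^\parallel_{ij}$ as the squared norm of the linear jump map into the weighted space $L^2\bigl(\Gamma_i\times\Gamma_j;\|\x-\y\|^{-3}\bigr)$ gives the inner product inducing \eqref{eq:paranorm}, and completeness follows by passing to the limit in $\bm H^\frac12_-(\Gamma)$ and identifying, edge by edge, the weighted-$L^2$ limit of the jumps with the jump of the limit (via a.e.\ subsequence convergence or Fatou); the finitely many edges then pose no summability issue, and the case of $\Horthos{\frac12}$ is verbatim the same with $\bm\tau_{ij}$ replaced by $\bm\tau_i,\bm\tau_j$. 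The one point worth flagging is your motivational remark that $\Hparas{\frac12}$ is not closed in $\bm H^\frac12_-(\Gamma)$: this is indeed true (fields supported away from the edges are dense in $\bm H^\frac12_-(\Gamma)$ and satisfy the matching relation, while a field with mismatched constant tangential traces has $\mathcal N^\parallel_{ij}=\infty$), but it is not needed for the proof, so if you keep it you should either justify it or present it only as heuristic motivation for why the edge-by-edge identification, rather than abstract closedness, is the operative step.
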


In other words, $\v\in\Hpara{},\Hortho{}$ satisfies $\v\in \bm
  H^\frac12(\Gamma_i)$ on the faces $\Gamma_i$ of $\Gamma$, and  
  the parallel resp. orthogonal component
  of the function $\v$ to edges $e_{ij}$ of $\Gamma$ are ``$H^\frac12$-continuous" in the sense of \eqref{eq:half_rel}; c.f. \cite{Buffa:2001p1125} for further details. 
 
We denote by $\Hpara{-}$, $\Hortho{-}$ the dual spaces of $\Hpara{}$, $\Hortho{}$ with pivot space $\Lt$. The corresponding duality pairing is denoted by $\para{\cdot}{\cdot}$ resp. $\ortho{\cdot}{\cdot}$.
The norms $\nHpara{-}{\cdot}$ and $\nHortho{-}{\cdot}$ are defined 
by duality.

For complex-valued functions $\v\in
[C^\infty(\overline{\Omega})]^3$ the tangential traces are
defined by
\begin{equation}\label{traces}
	\ptrace(\v):=\n\times(\v\times\n)_{|_\Gamma},
	\qquad	
	\gtrace(\v):=(\v\times\n)_{|_\Gamma}.
\end{equation}
We point out that $\ptrace(\v) = \v - (\v\cdot\n) \n$ gives the
component of $\v$ tangential to $\Gamma$, whereas $\gtrace(\v)$
provides a tangent vector field perpendicular to $\ptrace(\v)$.
Since $\Gamma$ is a polyhedron, for any edge $e_{ij}$ of $\Gamma$
the components of $\ptrace(\v)$ and $\gtrace(\v)$  tangential and normal to $e_{ij}$
are continuous, namely,
\begin{equation}\label{continuity}
\ptrace(\v)|_{\Gamma_i}\cdot\bm\tau_{ij}-\ptrace(\v)|_{\Gamma_j}\cdot\bm\tau_{ij}=0,
\qquad
\gtrace(\v)|_{\Gamma_i}\cdot\bm\tau_i-\gtrace(\v)|_{\Gamma_j}\cdot\bm\tau_j=0.
\end{equation}
This
means that both operators $\ptrace$ and $\gtrace$ can be viewed as
face-by-face projections; see \cite[p.36]{Buffa:2001p1125}.
Combining this observation with definitions \eqref{spaces},
we realize that $\Hpara{}$ and $\Hortho{}$ are the trace spaces 
of $\ptrace$, $\gtrace$ acting on $\Hone$. 
This is stated in the following Proposition.
\begin{proposition}[{\cite[Prop. 2.7]{Buffa:2001p1125}}]
\label{prop:otrace}
The trace operators
\[
\ptrace:\Hone\rightarrow\Hpara{},
\quad
\gtrace:\Hone\rightarrow\Hortho{}
\]
are linear, surjective and continuous operators.
In addition, there exists continuous right inverse maps 
$\Rpara:\Hpara{}\to \Hone$ and $\Rortho:\Hortho{}\to \Hone$.
\end{proposition}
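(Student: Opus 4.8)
The plan is to reduce both assertions to the standard vector trace $\bm\gamma:\Hone\to\Hb{\frac12}$, which is continuous and admits the continuous right inverse $\Rtrace$. For the continuity of $\ptrace$ and $\gtrace$, fix $\v\in\Hone$. On each face $\Gamma_i$ one has $\ptrace(\v)|_{\Gamma_i}=\bm\gamma\v|_{\Gamma_i}-(\bm\gamma\v|_{\Gamma_i}\cdot\n_i)\n_i$ and $\gtrace(\v)|_{\Gamma_i}=\bm\gamma\v|_{\Gamma_i}\times\n_i$, which are bounded pointwise linear maps of $\bm H^\frac12(\Gamma_i)$ since $\n_i$ is constant on $\Gamma_i$; summing over the faces therefore bounds $\|\ptrace(\v)\|_{\bm H_-^\frac12(\Gamma)}$ and $\|\gtrace(\v)\|_{\bm H_-^\frac12(\Gamma)}$ by $\|\v\|_{\Hone}$. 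For the edge seminorms appearing in \eqref{eq:paranorm}--\eqref{eq:orthonorm}, note that for a fixed edge $e_{ij}$ the scalar field $\v\cdot\bm\tau_{ij}$ lies in $H^1(\Omega)$, so $\gamma(\v\cdot\bm\tau_{ij})\in H^\frac12(\Gamma)$; using $\n_i\cdot\bm\tau_{ij}=0$ and $\n_i\times\bm\tau_i=\bm\tau_{ij}$ (together with the analogues on $\Gamma_j$) one computes $\ptrace(\v)|_{\Gamma_i}\cdot\bm\tau_{ij}=\gtrace(\v)|_{\Gamma_i}\cdot\bm\tau_i=\gamma(\v\cdot\bm\tau_{ij})|_{\Gamma_i}$. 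Hence $\mathcal N^\parallel_{ij}(\ptrace(\v))$ and $\mathcal N^\perp_{ij}(\gtrace(\v))$ coincide with the portion of the Gagliardo seminorm $|\gamma(\v\cdot\bm\tau_{ij})|^2_{H^\frac12(\Gamma)}$ integrated over $\Gamma_i\times\Gamma_j$, and are therefore $\lesssim\|\v\|_{\Hone}^2$; summing over the finitely many edges and invoking Proposition~\ref{P:H-1/2} gives $\ptrace(\v)\in\Hpara{}$ and $\gtrace(\v)\in\Hortho{}$ with the claimed continuity bounds (this also re-derives \eqref{continuity}).

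For surjectivity it suffices to construct a bounded right inverse $\Rpara$ of $\ptrace$. Indeed $\gtrace(\v)=\ptrace(\v)\times\n$ on each face, and $\bm h\mapsto\n\times\bm h$ is a bounded isomorphism of $\Hortho{}$ onto $\Hpara{}$ — the identity $(\n_i\times\bm h)\cdot\bm\tau_{ij}=\bm h\cdot\bm\tau_i$ shows it interchanges the two edge conditions in \eqref{spaces} — so $\Rortho\bm h:=\Rpara(\n\times\bm h)$ is continuous and satisfies $\gtrace(\Rortho\bm h)=(\n\times\bm h)\times\n=\bm h$. To build $\Rpara$, given $\bm g\in\Hpara{}$ we look for a single-valued field $\bm G\in\Hb{\frac12}$ of the form $\bm G|_{\Gamma_i}=\bm g|_{\Gamma_i}+c_i\n_i$, with scalar normal components $c_i\in H^\frac12(\Gamma_i)$ and $\|\bm G\|_{\bm H^\frac12(\Gamma)}\lesssim\|\bm g\|_{\Hpara{}}$; then $\Rpara\bm g:=\Rtrace\bm G$ satisfies $\ptrace(\Rpara\bm g)=\bm g$. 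The component of $\bm G$ along $\bm\tau_{ij}$ is continuous across each edge already — that is exactly the membership $\bm g\in\Hpara{}$ — so only the two components orthogonal to $\bm\tau_{ij}$ have to be reconciled, and there the rotation through the dihedral angle relating the local frames $\{\bm\tau_i,\n_i\}$ and $\{\bm\tau_j,\n_j\}$ prescribes the behaviour of $c_i$ and $c_j$ near $e_{ij}$ in terms of $\bm g|_{\Gamma_i}\cdot\bm\tau_i$ and $\bm g|_{\Gamma_j}\cdot\bm\tau_j$.

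The main obstacle is exactly this choice of normal components with a uniform norm bound: one cannot prescribe edge traces of $c_i$ directly, since $H^\frac12(\Gamma_i)$ has no traces on one-dimensional curves, and the prescriptions coming from the several edges of a face must moreover be compatible at its vertices. We treat this by a partition of unity subordinate to a covering of $\Gamma$ by face-interior patches, edge-neighbourhoods and vertex-neighbourhoods: on a face-interior patch one extends by zero (choosing $c_i$ to vanish near $\partial\Gamma_i$), which already lies in $\Hb{\frac12}$; on an edge- or vertex-neighbourhood one solves the corresponding model lifting problem on a dihedral wedge, resp.\ a polyhedral cone, exploiting the $H^\frac12$-continuity of $\bm g$ along the edge together with a local extension operator. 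Carrying this through with constants independent of $\bm g$ is the technical heart of \cite[Prop.~2.7]{Buffa:2001p1125} (and of \cite{Buffa:2002p1911} for general Lipschitz polyhedra), to which we refer for the full argument; the resulting linear bounded map $\bm g\mapsto\bm G\mapsto\Rtrace\bm G$ establishes the surjectivity of $\ptrace$ together with the continuous right inverse $\Rpara$, and hence also those of $\gtrace$ and $\Rortho$.
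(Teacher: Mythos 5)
The paper offers no proof of this proposition: it is quoted directly from \cite[Prop.~2.7]{Buffa:2001p1125}, so there is no internal argument to compare yours against. What you prove is correct as far as it goes. Your continuity argument is complete and sound: since $\n_i$ is constant on each face, the face-by-face formulas for $\ptrace$ and $\gtrace$ control the broken norm, and the identity $\ptrace(\v)|_{\Gamma_i}\cdot\bm\tau_{ij}=\gtrace(\v)|_{\Gamma_i}\cdot\bm\tau_i=\gamma(\v\cdot\bm\tau_{ij})|_{\Gamma_i}$ (using $\n_i\cdot\bm\tau_{ij}=0$ and $\n_i\times\bm\tau_i=\bm\tau_{ij}$) correctly bounds the edge seminorms $\mathcal N^\parallel_{ij}$, $\mathcal N^\perp_{ij}$ by the Gagliardo seminorm of the trace of the scalar $H^1(\Omega)$ function $\v\cdot\bm\tau_{ij}$, recovering \eqref{continuity} along the way. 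The reduction of $\Rortho$ to $\Rpara$ through the face-wise rotation $\bm h\mapsto\n\times\bm h$, which exchanges the two edge conditions in \eqref{spaces}, is also correct, as is the observation that $\Rpara\bm g:=\Rtrace\bm G$ works once a single-valued $\bm G\in\Hb{\frac12}$ with $\bm G|_{\Gamma_i}=\bm g|_{\Gamma_i}+c_i\n_i$ and $\nHb{\frac12}{\bm G}\preceq\nHpara{}{\bm g}$ is available. However, that lifting --- the choice of the normal components $c_i$ compatible across edges and vertices with a uniform bound --- is exactly the substance of the cited proposition, and you do not carry it out: you identify the obstruction (no edge traces in $H^{\frac12}(\Gamma_i)$, vertex compatibility) and then defer to \cite{Buffa:2001p1125}. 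So as a self-contained proof your proposal is incomplete at its central step (surjectivity), though nothing you assert is wrong, and your treatment is in fact more detailed than the paper's, which relies entirely on the citation.
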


We can now establish a critical result for the upcoming analysis.
Note that $\Hortho{}$ consists of tangential vector fields
  whereas $\Hb{\frac12}$ does not.

\begin{lemma}
\label{lem:ttrace}
There exists a continuous map $\tperp:\Hb{\frac12}\to \Hortho{}$ with right inverse $\tperp^{-1}:\Hortho{}\to \Hb{\frac12}$.
\end{lemma}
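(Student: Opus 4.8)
The plan is to build both maps by composing trace operators that are already at hand, so that continuity is automatic and only one algebraic identity must be checked. Write $\bm\gamma\colon\Hone\to\Hb{\frac12}$ for the (vector-valued) Dirichlet trace and $\Rtrace$ for its continuous right inverse -- the case $s=\frac12$ of the operators introduced just before Proposition~\ref{P:H-1/2} -- and let $\gtrace\colon\Hone\to\Hortho{}$ and $\Rortho\colon\Hortho{}\to\Hone$ be the operators of Proposition~\ref{prop:otrace}. I would then set
\[
  \tperp := \gtrace\circ\Rtrace\colon\Hb{\frac12}\longrightarrow\Hortho{},
  \qquad
  \tperp^{-1} := \bm\gamma\circ\Rortho\colon\Hortho{}\longrightarrow\Hb{\frac12}.
\]
Both are bounded, being compositions of bounded operators, so the only thing left is to show that $\tperp\circ\tperp^{-1}=\mathrm{id}$ on $\Hortho{}$.

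The crux is the observation that the tangential trace $\gtrace$ of an $\Hone$-field is determined by its full Dirichlet trace alone. For smooth $\v\in[C^\infty(\overline{\Omega})]^3$ this is immediate from \eqref{traces}, which reads $\gtrace(\v)=\bm\gamma(\v)\times\n$. Since $[C^\infty(\overline{\Omega})]^3$ is dense in $\Hone$, the operator $\gtrace$ is $\Hone$-continuous, and $\w\mapsto\w\times\n$ is bounded on $[L^2(\Gamma)]^3$ (with $|\w\times\n|\le|\w|$ a.e.), the identity $\gtrace(\v)=\bm\gamma(\v)\times\n$ persists for every $\v\in\Hone$ as an equality in $\Lt$; in particular $\v\times\n\in\Hortho{}$ whenever $\v\in\Hone$, equivalently whenever $\v\in\Hb{\frac12}$. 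Hence two $\Hone$-fields with the same full trace have the same tangential trace.

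Granting this, the right-inverse property drops out. For $\w\in\Hortho{}$ put $\v:=\tperp^{-1}\w=\bm\gamma(\Rortho\w)\in\Hb{\frac12}$. Then $\Rtrace\v$ and $\Rortho\w$ are $\Hone$-fields sharing the trace $\bm\gamma(\Rtrace\v)=\v=\bm\gamma(\Rortho\w)$, so $\gtrace(\Rtrace\v)=\gtrace(\Rortho\w)=\w$, the last equality because $\Rortho$ is a right inverse of $\gtrace$; thus $\tperp(\tperp^{-1}\w)=\w$. As a byproduct $\tperp\v=\bm\gamma(\Rtrace\v)\times\n=\v\times\n$, so $\tperp$ is in fact independent of the chosen right inverse $\Rtrace$. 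The only genuinely nontrivial ingredient is the density-and-continuity argument of the middle paragraph showing that $\gtrace$ factors through $\bm\gamma$ -- and, as a consequence, that crossing a field of $\Hb{\frac12}$ with $\n$ produces a field in $\Hortho{}$; everything else is a formal manipulation of the maps provided by Proposition~\ref{prop:otrace}.
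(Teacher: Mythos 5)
Your proposal is correct and follows essentially the same route as the paper: you define $\tperp=\gtrace\circ\Rtrace$ and $\tperp^{-1}=\bm\gamma\circ\Rortho$, get continuity by composition, and verify $\tperp\circ\tperp^{-1}=\mathrm{id}$ through the identity $\gtrace(\v)=\bm\gamma(\v)\times\n$, which is exactly the face-by-face projection property the paper invokes (you justify it by density, the paper cites it from Buffa--Ciarlet). No gaps.
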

\begin{proof}
We define $\tperp:\Hb{\frac12}\to \Hortho{}$ and
$\tperp^{-1}:\Hortho{}\to \Hb{\frac12}$ by
\begin{equation*}
\begin{gathered}
\tperp(\w) = \gtrace(\Rtrace(\w)),
\quad\forall\,\w\in\Hb{\frac12},
\\
\tperp^{-1}(\v)=\bm\gamma(\Rortho(\v)),\quad\forall\,\v\in\Hortho{}
\end{gathered}
\end{equation*}
where $\gamma$ and $\Rtrace$ are the trace and its right inverse,
whereas $\gamma_\perp$ and $\Rortho$ are the operators of
Proposition \ref{prop:otrace}.
The continuity of these operators implies the continuity of $\tperp$
and $\tperp^{-1}$.

To prove that $\tperp^{-1}$ is the right inverse of $\tperp$,
we observe that
\[
\tperp(\tperp^{-1}(\v)) = \gtrace(\Rtrace(\bm \gamma(\Rortho(\v)))), \qquad \forall\v\in\Hortho{},
\]
and that $\gtrace$ projects face by face on $\Gamma$ \cite[page 36]{Buffa:2001p1125}.
If $\w=\tperp^{-1}(\v)\in \Hb{\frac12}$ and $\bm g=\Rtrace\w\in\Hone$,
then $\w=\gamma(\Rortho(\v))$ and $\tperp(\w)=\gtrace(\bm g)$.
Since
$\gtrace(\bm g)|_{\Gamma_i} =\bm\gamma(\bm g)_{|_{\Gamma_i}}\times\n$
for each face $\Gamma_i$ of $\Gamma$, we obtain on $\Gamma_i$
\begin{align*}
	\tperp(\tperp^{-1}(\v)) 
	& = \tperp(\w) = \gtrace(\bm g)
	= \bm\gamma(\bm g)\times\n \\
        & = \bm\gamma(\Rtrace\w)\times\n
        = \w\times\n
        = \bm\gamma(\Rortho(\v))\times\n
	= \gtrace(\Rortho(\v)) = \v.
\end{align*}
Thus, $\tperp^{-1}$ is indeed the right inverse of $\tperp$.
\end{proof}


\subsection{Tangential differential operators}
We set $H^{\frac32}(\Gamma) := \gamma(H^2(\Omega))$,
and define the tangential operators
$\grad:\H{\frac32}\rightarrow\Hpara{}$ and
$\curlv:\H{\frac32}\rightarrow\Hortho{}$ by
\begin{equation}
	\label{eq:gradcurl1}
	\grad \phi := \ptrace({\bf grad} \,\phi)\qquad
	\curlv \phi := \gtrace({\bf grad} \,\phi)\qquad\forall \phi\in H^2(\Omega),
\end{equation}
where ${\bf grad}$ denotes the standard gradient in $\real^3$. 
According to definitions \eqref{traces}, $\grad \phi$ is the
orthogonal projection of ${\bf grad} \phi$ on each face $\Gamma_i$ of
$\Gamma$, whereas $\curlv \phi$ is obtained from the former by a 
$\pi/2$ rotation.
It can be shown that the maps $\grad$ and $\curlv$ are linear and continuous. 

The adjoint operators $\div:\Hpara{-}\rightarrow \H{-\frac32}$ and
$\curls:\Hortho{-}\rightarrow \H{-\frac32}$ can be defined as follows
\begin{align*}
	\threetwo{\div \v}{\phi} &= -\para{\v}{\grad \phi},\\
	\threetwo{\curls \w}{\phi} &= \ortho{\w}{\curlv \phi},
\end{align*}
for all $\phi\in \H{\frac32}$, $\v\in\Hpara{-}$ and $\w\in\Hortho{-}$.

In view of these definitions we now introduce the spaces
\begin{align*}
	\Hdiv &:= \set{\v\in\Hpara{-}}{\div \v\in \H{-\frac12}},\\
	\Hcurl &:= \set{\v\in\Hortho{-}}{\curls \v\in \H{-\frac12}},
\end{align*}
which will play a crucial role for the upcoming analysis and are 
endowed with the graph norms
\begin{align*}
	\nHdiv{\v}^2 &:= \nHpara{-}{\v}^2 + \nH{-\frac12}{\div \v}^2,\\
	\nHcurl{\v}^2 &:= \nHortho{-}{\v}^2 + \nH{-\frac12}{\curls \v}^2.
\end{align*}
Let the natural solution space of Maxwell's equations be denoted by 
\[
	\HcurlO=\set{\v\in \bm L^2(\Omega)}{\curl \v\in\bm L^2(\Omega)}.
\]

\begin{theorem}[{\cite[Theorem 4.6]{Buffa:2001p1910}}]
The mappings $\ptrace$ and $\gtrace$ admit linear and continuous
extensions
\[
\ptrace:\HcurlO\rightarrow \Hcurl,
\quad
\gtrace:\HcurlO\rightarrow \Hdiv.
\]
Moreover, the following integration by parts formula holds true:
\begin{equation*}
	\int_\Omega \Big(\curl \v\cdot \u -\curl \u\cdot\v\Big)\dO = \para{\gtrace\u}{\ptrace\v},\quad \forall \,\u\in\HcurlO,\v\in\Hone.
\end{equation*}
\end{theorem}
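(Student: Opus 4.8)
The plan is to establish the result first for smooth fields by classical vector calculus and then transfer the bounds to $\HcurlO$ by density, the transfer being made possible by the bounded right inverses from Propositions~\ref{P:H-1/2} and~\ref{prop:otrace}. First I would integrate the pointwise identity $\nabla\cdot(\v\times\u)=\curl\v\cdot\u-\curl\u\cdot\v$ over $\Omega$ to obtain, for $\u,\v\in[C^\infty(\overline\Omega)]^3$, the formula $\int_\Omega(\curl\v\cdot\u-\curl\u\cdot\v)\dO=\int_\Gamma(\v\times\u)\cdot\n\ds$. Since $(\v\times\u)\cdot\n=(\u\times\n)\cdot\v$ and $\u\times\n$ is tangential, the boundary integrand sees only $\ptrace\v$, so the right-hand side equals the $\Lt$-pairing of $\gtrace\u=(\u\times\n)|_\Gamma$ with $\ptrace\v$, i.e. $\para{\gtrace\u}{\ptrace\v}$. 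Approximating an arbitrary $\v\in\Hone$ by smooth fields in the $\Hone$-norm, and using that $\gtrace\u\in\Lt$ while $\ptrace:\Hone\to\Lt$ is continuous, this identity persists for smooth $\u$ and arbitrary $\v\in\Hone$.

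Next I would bound, for fixed smooth $\u$, the functional $\gtrace\u$ on $\Hparas{\frac12}$: given $\phi$ there, lift it to $\Phi=\Rpara\phi\in\Hone$ and use the preceding identity to write $\para{\gtrace\u}{\phi}=\int_\Omega(\curl\Phi\cdot\u-\curl\u\cdot\Phi)\dO$; replacing $\Phi$ by another lift changes this by the same expression applied to a field with vanishing tangential trace, which is zero, so the value depends only on $\phi$ and is bounded by $C\|\u\|_{\HcurlO}\|\Phi\|_{\Hone}\le C\|\u\|_{\HcurlO}\,\nHpara{}{\phi}$; hence $\gtrace\u\in\Hpara{-}$ with $\nHpara{-}{\gtrace\u}\le C\|\u\|_{\HcurlO}$. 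To control $\div\gtrace\u$, I would take $\phi=\gamma\psi$ with $\psi\in H^2(\Omega)$, so that $\grad\phi=\ptrace(\nabla\psi)$, and combine the definition of $\div$ with $\curl\nabla\psi=0$ and the previous step to get $\threetwo{\div\gtrace\u}{\phi}=-\para{\gtrace\u}{\grad\phi}=\int_\Omega\curl\u\cdot\nabla\psi\dO$. Because $\nabla\cdot(\curl\u)=0$, this value is unchanged when $\psi$ is replaced by any other $\Hone$-extension of $\phi$; taking $\psi=R_\gamma\phi$ yields $|\threetwo{\div\gtrace\u}{\phi}|\le\nLt{\curl\u}{\Omega}\|R_\gamma\phi\|_{\Hone}\le C\|\u\|_{\HcurlO}\,\nH{\frac12}{\phi}$, and since $\H{\frac32}$ is dense in $\H{\frac12}$ we conclude $\div\gtrace\u\in\H{-\frac12}$ with the analogous bound. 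Together these give $\nHdiv{\gtrace\u}\le C\|\u\|_{\HcurlO}$ for all $\u\in[C^\infty(\overline\Omega)]^3$.

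Since $[C^\infty(\overline\Omega)]^3$ is dense in $\HcurlO$, the last bound extends $\gtrace$ to a continuous operator $\HcurlO\to\Hdiv$. The corresponding statement for $\ptrace$ follows by rerunning the argument with the pair $(\curlv,\curls)$ and the space $\Horthos{\frac12}$ in place of $(\grad,\div)$ and $\Hparas{\frac12}$ — or, more quickly, by observing that the rotation $\v\mapsto\v\times\n$ identifies $\Hparas{\frac12}$ with $\Horthos{\frac12}$ and intertwines $\div$ with $\curls$, so that $\ptrace\u$ is the rotate of $\gtrace\u$ and inherits its regularity. Finally, the integration by parts formula for general $\u\in\HcurlO$, $\v\in\Hone$ is obtained by letting smooth $\u_n\to\u$ in $\HcurlO$ in the identity of the first paragraph: its left-hand side is continuous in $\u$, and $\gtrace\u_n\to\gtrace\u$ in $\Hdiv\hookrightarrow\Hpara{-}$ forces $\para{\gtrace\u_n}{\ptrace\v}\to\para{\gtrace\u}{\ptrace\v}$.

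The step I expect to be the main obstacle is the well-posedness of the duality pairings that define the extended traces, namely their independence of the liftings $\Rpara$ and $R_\gamma$. This is precisely where the vanishing of $\nabla\cdot\curl$ and the face-by-face projection property \eqref{continuity} of $\ptrace$ and $\gtrace$ — which underlies the surjectivity with bounded right inverse in Propositions~\ref{P:H-1/2} and~\ref{prop:otrace} — must be used, so that the boundary contributions depend only on the prescribed tangential data and not on the particular extensions chosen.
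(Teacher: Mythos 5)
This statement is not proved in the paper at all: it is quoted verbatim from \cite[Theorem 4.6]{Buffa:2001p1910}, so there is no in-paper argument to compare against. Your proof sketch — the classical Green identity for smooth fields, the bounded liftings $\Rpara$, $\Rortho$, $R_\gamma$ (Propositions \ref{P:H-1/2} and \ref{prop:otrace}) to bound $\gtrace\u$ in $\Hpara{-}$ and $\div\,\gtrace\u$ in $\H{-\frac12}$ via $\operatorname{div}\curl\u=0$, the rotation $\v\mapsto\v\times\n$ to treat $\ptrace$, and density of smooth fields in $\HcurlO$ — is correct and is essentially the standard argument given in the cited reference.
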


Furthermore, a duality pairing $\po{\cdot}{\cdot}$ between $\Hdiv$ and
$\Hcurl$ can be established by using an orthogonal decomposition of
those spaces so that the following integration by parts formula still holds
\begin{equation*}
	\label{eq:IntByParts}
	\int_\Omega \Big(\curl \v\cdot \u -\curl \u\cdot\v\Big)\dO = \po{\gtrace\u}{\ptrace\v},\qquad \forall \,\u,\v\in\HcurlO.
\end{equation*}
For more details, we refer to \cite{Buffa:2001p1910}.

The differential operators $\grad$ and $\curlv$ can be further
extended as follows.
\begin{proposition}[{\cite[p.39]{Buffa:2001p1910}}]\label{P:grad-curl}
The tangential gradient and curl operators introduced in
\eqref{eq:gradcurl1} can be extended to linear and continuous
operators defined on $\H{\frac12}$
	\[
		\grad:\H{\frac12}\rightarrow \Hortho{-},\qquad
		\curlv:\H{\frac12}\rightarrow \Hpara{-}.
	\]
\end{proposition}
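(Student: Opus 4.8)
The plan is to realize both extended operators as compositions of maps that are already available: a continuous lifting $R_\gamma:\H{\frac12}\to H^1(\Omega)$, the Euclidean gradient (which is curl-free, hence maps into $\HcurlO$), and the tangential traces $\ptrace,\gtrace$, whose continuity on $\HcurlO$ is furnished by the trace theorem above. For $\phi\in\H{\frac12}$ set $\tilde\phi:=R_\gamma\phi\in H^1(\Omega)$, so that ${\bf grad}\,\tilde\phi\in\bm L^2(\Omega)$ has vanishing distributional curl and therefore ${\bf grad}\,\tilde\phi\in\HcurlO$ with $\|{\bf grad}\,\tilde\phi\|_{\HcurlO}=\nLt{{\bf grad}\,\tilde\phi}{\Omega}\le\|\tilde\phi\|_{H^1(\Omega)}\le C\,\nH{\frac12}{\phi}$. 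I would then \emph{define}, for $\phi\in\H{\frac12}$,
\[
\grad\phi:=\ptrace({\bf grad}\,\tilde\phi)\in\Hcurl\hookrightarrow\Hortho{-},
\qquad
\curlv\phi:=\gtrace({\bf grad}\,\tilde\phi)\in\Hdiv\hookrightarrow\Hpara{-},
\]
the inclusions being continuous because $\nHortho{-}{\cdot}\le\nHcurl{\cdot}$ and $\nHpara{-}{\cdot}\le\nHdiv{\cdot}$ by the definition of the graph norms. Linearity is clear, and chaining the displayed bound with the continuity of $\ptrace:\HcurlO\to\Hcurl$ and $\gtrace:\HcurlO\to\Hdiv$ gives $\nHortho{-}{\grad\phi}\le C\,\nH{\frac12}{\phi}$ and $\nHpara{-}{\curlv\phi}\le C\,\nH{\frac12}{\phi}$.

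The step that actually requires an argument is that these formulas do not depend on the lifting, equivalently that $\ptrace({\bf grad}\,w)=0$ and $\gtrace({\bf grad}\,w)=0$ for every $w\in H^1(\Omega)$ with $\gamma w=0$. This cannot be read off pointwise, since at the $H^1$ level ${\bf grad}\,w$ carries no classical trace; instead one uses that such $w$ lies in $H^1_0(\Omega)$, the closure of $C^\infty_c(\Omega)$ in $H^1(\Omega)$. Picking $w_n\in C^\infty_c(\Omega)$ with $w_n\to w$ in $H^1(\Omega)$ gives ${\bf grad}\,w_n\to{\bf grad}\,w$ in $\bm L^2(\Omega)$ and hence, all curls being zero, in $\HcurlO$; since each ${\bf grad}\,w_n$ is smooth with compact support in $\Omega$, \eqref{traces} forces $\ptrace({\bf grad}\,w_n)=\gtrace({\bf grad}\,w_n)=0$, and passing to the limit with the continuity of the two trace maps on $\HcurlO$ proves the claim. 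Applied to the difference of any two liftings of a fixed $\phi$, this shows $\grad\phi$ and $\curlv\phi$ are well defined; I expect this lifting-independence, resting on the $\HcurlO$-trace theory together with the density of $C^\infty_c(\Omega)$ in $H^1_0(\Omega)$, to be the only point beyond a routine composition of known maps.

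It remains to check compatibility with \eqref{eq:gradcurl1}: for $\phi\in\H{\frac32}=\gamma(H^2(\Omega))$ we may take $\phi=\gamma\psi$ with $\psi\in H^2(\Omega)$, which is in particular an admissible $H^1$-lifting, so the definition above returns exactly $\ptrace({\bf grad}\,\psi)$ and $\gtrace({\bf grad}\,\psi)$, i.e. the quantities of \eqref{eq:gradcurl1}; these lie in $\Hpara{}$ resp. $\Hortho{}$, which embed continuously into $\Hortho{-}$ resp. $\Hpara{-}$ through the pivot $\Lt$, so the new maps genuinely extend the previous ones. Alternatively, since $\H{\frac32}$ is dense in $\H{\frac12}$, the estimate of the first paragraph already shows the operators of \eqref{eq:gradcurl1} to be bounded from $\H{\frac32}$ into $\Hortho{-}$, $\Hpara{-}$ for the weaker $\H{\frac12}$-norm, whence they admit unique continuous extensions to $\H{\frac12}$.
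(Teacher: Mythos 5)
Your proof is correct; note first that the paper itself offers no proof of this proposition --- it is quoted directly from \cite{Buffa:2001p1910}, so there is no in-paper argument to compare against, and what you have written is a self-contained justification using only material stated earlier in the paper. Your route --- lift $\phi\in\H{\frac12}$ to $\tilde\phi=R_\gamma\phi\in H^1(\Omega)$, note that ${\bf grad}\,\tilde\phi$ is curl-free and hence lies in $\HcurlO$ with norm $\preceq\nH{\frac12}{\phi}$, then apply the $\HcurlO$-trace theorem stated just before the proposition to land in $\Hcurl\hookrightarrow\Hortho{-}$ and $\Hdiv\hookrightarrow\Hpara{-}$ --- is essentially the standard construction underlying the cited result, and it correctly accounts for the exchange of the $\parallel$ and $\perp$ labels relative to \eqref{eq:gradcurl1}. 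You also isolate the one genuinely nontrivial point, independence of the lifting, and your argument for it is sound: for a Lipschitz polyhedron $\ker\gamma\cap H^1(\Omega)=H^1_0(\Omega)$ is classical, and passing the vanishing of $\ptrace({\bf grad}\,w_n)$, $\gtrace({\bf grad}\,w_n)$ for $w_n\in C^\infty_c(\Omega)$ to the limit uses only the continuity of the extended traces on $\HcurlO$. Two small points worth making explicit if this were to be written out: the compatibility check on $\H{\frac32}$ uses that the extended traces on $\HcurlO$ coincide with $\ptrace,\gtrace$ on $\Hone$ (this is what ``extension'' means in the quoted trace theorem, and it is also needed to assert that smooth compactly supported fields have vanishing extended traces), and the continuous embeddings $\Hpara{}\subset\Lt\subset\Hortho{-}$, $\Hortho{}\subset\Lt\subset\Hpara{-}$ through the pivot space should be stated as the reason the new operators genuinely extend \eqref{eq:gradcurl1}. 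Neither is a gap, merely a citation to make.
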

Moreover their formal $\Lt$-adjoints 
\[
	\div:\Hortho{}\rightarrow\H{-\frac12}
	\qquad{\rm and}\qquad
	\curls:\Hpara{}\rightarrow \H{-\frac12}
\]
can be defined by
\begin{equation}\label{div-curl}
\begin{aligned}
	\half{\div \v}{\phi} &= -\ortho{\v}{\grad \phi},\\
	\half{\curls \w}{\phi} &= \para{\w}{\curlv \phi},
\end{aligned}
\end{equation}
for all $\phi\in \H{\frac12}$, $\v\in\Hortho{}$ and $\w\in\Hpara{}$.

\subsection{Potentials}\label{S:potentials}
Let $\Gk$ denote the fundamental solution of the Helmholtz operator
$\Delta+k^2$, which is given by
\[
	\Gk(\x,\y) := \frac{\exp(ik|\x-\y|)}{4\pi|\x-\y|}.
\]
The scalar and vector single layer potential are then defined respectively by
\begin{align*}
	&\SLPs:\H{-\frac12}\rightarrow H_{{\rm loc}}^1(\real^3),  &\SLPs(v)(\x) &:= \int_\Gamma \Gk(\x,\y)v(\y)\ds(\y),\\
	&\SLPv:\Hpara{-}\rightarrow \bm H_{{\rm loc}}^1(\real^3), &\SLPv(\v)(\x) &:= \int_\Gamma \Gk(\x,\y)\v(\y)\ds(\y).
\end{align*}
These potentials are known to be continuous \cite[Theorem 3.8]{Buffa:2003p483}.
Finally the scalar and vector single layer boundary operators are defined by
\begin{align*}
	&\Vk:\H{-\frac12}\rightarrow \H{\frac12},  &\Vk &:= \gamma\circ\SLPs,\\
	&\Ak:\Hpara{-}\rightarrow \Hpara{}, &\Ak &:= \ptrace\circ\SLPv.
\end{align*}
The simultaneous continuity of the trace operators $\gamma$, $\ptrace$
and the single layer potentials yield then the continuity of the
single layer boundary operators, namely,
\begin{equation}
	\label{eq:continuitySLBP}
	\nH{\frac12}{\Vk v} \preceq \nH{-\frac12}{v},
	\qquad
	\nHpara{}{\Ak \v} \preceq \nHpara{-}{\v},
\end{equation}
for all $v\in\H{-\frac12}$ and $\v\in\Hpara{-}$.
In particular, if restricted to
  $L^2(\Gamma)\subset\H{-\frac12}$, then the range of $\Vk$ lies in $\H{1}$ 
\cite[Theorem 3.8]{Buffa:2003p483}, i.e.,
\begin{equation}
	\label{eq:RL2Vk}
	{\rm Im}(\Vk(L^2(\Gamma))\subset \H{1}.
\end{equation}
Likewise, for the vector case the corresponding result reads
\cite[Prop. 2]{Buffa:2002p368}
\begin{equation}
	\label{eq:RL2Ak}
	{\rm Im}(\Ak(\Lt)\subset \bm H^1(\Gamma).
\end{equation}

\subsection{Interpolation of weighted spaces}
In the following section we will be confronted with interpolation of
weighted $L^2$-spaces. We thus recall in this section some basic
results taken from Tartar's book \cite{MR2328004}, which are valid
without regularity on the weights.

Let $\mesh$ be a family of shape-regular triangulations decomposing $\Gamma$ into flat triangles such that the surface covered by the triangles coincides with $\Gamma$. Denote the set of edges of the mesh by $\edges$. For a fixed triangulation let $h_\ka$ denote the diameter of any element $\ka\in\mesh$ and let $\h$ be the piecewise constant function such that $\h|_\ka=h_\ka$.

\begin{lemma}[{\cite[Lemma 22.3, p.110]{MR2328004}}]
	\label{lem:Tartar1}
	If $A$ is linear from $E_0+E_1$ into $F_0+F_1$ and maps $E_0$ into $F_0$ with $\|Ax\|_{F_0}\le M_0 \|x\|_{E_0}$ for all $x\in E_0$, and maps $E_1$ into $F_1$ with $\|Ax\|_{F_1}\le M_1 \|x\|_{E_1}$ for all $x\in E_1$, then $A$ is linear continuous from $(E_0,E_1)_{\theta,p}$ into $(F_0,F_1)_{\theta,p}$ for all $\theta,p$, and for $0<\theta<1$ one has
\[
	\|Aa\|_{(F_0,F_1)_{\theta,p}} \le M_0^{1-\theta} M_1^\theta \|a\|_{(E_0,E_1)_{\theta,p}}
	\quad\text{for all }a\in (E_0,E_1)_{\theta,p}.
\]
The space $(E_0,E_1)_{\theta,p}$ denotes the interpolation space between $E_0$ and $E_1$ based on the $L^p$--inner product.
\end{lemma}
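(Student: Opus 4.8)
The plan is to argue directly from the $K$-functional description of the real interpolation spaces, which is the only structural feature of $(E_0,E_1)_{\theta,p}$ that matters here. Recall that for $a\in E_0+E_1$ and $t>0$ one sets $K(t,a;E_0,E_1):=\inf_{a=a_0+a_1}\bigl(\|a_0\|_{E_0}+t\|a_1\|_{E_1}\bigr)$, the infimum being over all splittings with $a_0\in E_0$, $a_1\in E_1$, and that for $1\le p<\infty$ the norm on $(E_0,E_1)_{\theta,p}$ is $\|a\|_{(E_0,E_1)_{\theta,p}}=\bigl(\int_0^\infty\bigl(t^{-\theta}K(t,a;E_0,E_1)\bigr)^p\,\tfrac{dt}{t}\bigr)^{1/p}$, with the supremum over $t$ replacing the integral when $p=\infty$. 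Since the claim is trivial when $M_0M_1=0$, I would assume $M_0,M_1>0$ from the outset.

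First I would bound the $K$-functional of the image in terms of that of the argument. Fixing $a\in E_0+E_1$ and any decomposition $a=a_0+a_1$, linearity gives $Aa=Aa_0+Aa_1$ with $Aa_0\in F_0$ and $Aa_1\in F_1$, so the two continuity hypotheses yield $\|Aa_0\|_{F_0}+t\|Aa_1\|_{F_1}\le M_0\|a_0\|_{E_0}+tM_1\|a_1\|_{E_1}=M_0\bigl(\|a_0\|_{E_0}+\tfrac{M_1}{M_0}t\,\|a_1\|_{E_1}\bigr)$. Taking the infimum over all such decompositions produces the scaling estimate $K(t,Aa;F_0,F_1)\le M_0\,K\!\bigl(\tfrac{M_1}{M_0}t,a;E_0,E_1\bigr)$ for all $t>0$.

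The second step is to feed this into the integral (resp. supremum) defining the $(F_0,F_1)_{\theta,p}$-norm and perform the substitution $s=\tfrac{M_1}{M_0}t$, under which $\tfrac{dt}{t}=\tfrac{ds}{s}$ and $t^{-\theta}=\bigl(\tfrac{M_1}{M_0}\bigr)^{\theta}s^{-\theta}$. For $1\le p<\infty$ this gives $\|Aa\|_{(F_0,F_1)_{\theta,p}}^p\le M_0^p\bigl(\tfrac{M_1}{M_0}\bigr)^{\theta p}\|a\|_{(E_0,E_1)_{\theta,p}}^p$, and extracting $p$-th roots collapses the constant to $M_0^{1-\theta}M_1^{\theta}$; the endpoint $p=\infty$ is handled identically with the supremum in place of the integral. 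In particular, finiteness of the right-hand side shows that $A$ maps $(E_0,E_1)_{\theta,p}$ boundedly into $(F_0,F_1)_{\theta,p}$.

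I do not anticipate a genuine obstacle. The only points needing a word of care are the legitimacy of the decomposition argument — which rests on the standing hypothesis that $A$ is defined on the whole sum space $E_0+E_1$ — the trivial degenerate cases $M_0M_1=0$, and keeping the exponents straight in the change of variables (it is the asymmetric rescaling of $t$ that is responsible for the geometric mean $M_0^{1-\theta}M_1^{\theta}$ rather than, say, $\max\{M_0,M_1\}$). Identifying the ``$L^p$-based'' interpolation space used in the paper with the $K$-method space recalled above is standard and requires no separate argument.
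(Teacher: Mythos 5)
Your proof is correct: the $K$-functional estimate $K(t,Aa;F_0,F_1)\le M_0\,K\bigl(\tfrac{M_1}{M_0}t,a;E_0,E_1\bigr)$ followed by the change of variables $s=\tfrac{M_1}{M_0}t$ is exactly the standard argument, and the exponents come out right, including the $p=\infty$ endpoint. The paper itself gives no proof, quoting the result directly from Tartar's book, and the argument found there is essentially the one you wrote, so there is nothing further to reconcile.
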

\begin{lemma}[{\cite[Lemma 23.1, p.115]{MR2328004}}]
	\label{lem:Tartar2}
	For a measurable positive function $w$ on $\Gamma$, let
	\[
		E(w) = \left\{ u\,\middle|\,\int_\Gamma |u(x)|^2 w(x)\,dx<\infty\right\} 
		\quad \text{with} \quad
		\|u\|_w=\left(\int_\Gamma |u(x)|^2 w(x)\,dx \right)^{\frac12}.
	\]
	If $w_0,w_1$ are two such functions, then for $0<\theta<1$ one has
	\[
		(E(w_0),E(w_1))_{\theta,2} = E(w_\theta)
		\quad\text{with equivalent norms, where } w_\theta=w_0^{1-\theta}w_1^\theta.
	\]
\end{lemma}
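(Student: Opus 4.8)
The plan is to reduce the weighted statement to a single‑weight interpolation identity and then to compute the $K$‑functional of the reduced couple explicitly. First I would observe that, since $w_0$ is positive and measurable, pointwise multiplication $u\mapsto w_0^{1/2}u$ is an isometric isomorphism from $E(w_0)$ onto $L^2(\Gamma)$, from $E(w_1)$ onto $E(\rho)$, and from $E(w_\theta)$ onto $E(\rho^\theta)$, where $\rho:=w_1/w_0$ is again a positive measurable weight. Because isometric isomorphisms of couples commute with the real interpolation functor, the lemma is equivalent to
\[
	\bigl(L^2(\Gamma),\,E(\rho)\bigr)_{\theta,2}=E(\rho^\theta)\qquad\text{with equivalent norms.}
\]
At this stage one uses only that $\rho>0$ a.e., which guarantees that both endpoint spaces embed continuously into the space of $dx$‑a.e.\ finite measurable functions on $\Gamma$; hence the couple is admissible and no regularity of the weight is needed.

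Next I would compute the $K$‑functional of this reduced couple. Since $L^2(\Gamma)$ and $E(\rho)$ are Hilbert spaces, it is convenient to use the quadratic variant $K_2(t,f)^2=\inf_{f=g+h}\bigl(\|g\|_{L^2(\Gamma)}^2+t^2\|h\|_{E(\rho)}^2\bigr)$, which is classically equivalent to the usual $K$‑functional and describes the same interpolation space with an equivalent norm. The infimum decouples pointwise: for a.e.\ $x$ one minimizes $|g|^2+t^2\rho(x)|h|^2$ over $g+h=f(x)$, a one‑line quadratic minimization with optimal value $\tfrac{t^2\rho(x)}{1+t^2\rho(x)}\,|f(x)|^2$, whose minimizers are measurable and of finite endpoint‑norm whenever $f\in L^2(\Gamma)+E(\rho)$. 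Therefore
\[
	K_2(t,f)^2=\int_\Gamma\frac{t^2\rho(x)}{1+t^2\rho(x)}\,|f(x)|^2\,dx.
\]

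Finally I would insert this into $\|f\|_{(L^2,E(\rho))_{\theta,2}}^2\simeq\int_0^\infty t^{-2\theta}K_2(t,f)^2\,\tfrac{dt}{t}$ and exchange the order of integration by Tonelli, so that the power weight is recovered through the scalar integral
\[
	\int_0^\infty t^{-2\theta}\,\frac{t^2\rho(x)}{1+t^2\rho(x)}\,\frac{dt}{t}
	=\frac{\rho(x)^\theta}{2}\int_0^\infty\frac{s^{-\theta}}{1+s}\,ds
	=\frac{\pi}{2\sin(\pi\theta)}\,\rho(x)^\theta,
\]
where one substitutes $s=t^2\rho(x)$ and uses the Beta‑function identity $\int_0^\infty s^{-\theta}(1+s)^{-1}\,ds=\pi/\sin(\pi\theta)$ for $0<\theta<1$. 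This yields $\|f\|_{(L^2,E(\rho))_{\theta,2}}^2\simeq\|f\|_{E(\rho^\theta)}^2$, which together with the first step proves the lemma.

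The step I expect to be the only real obstacle is the bookkeeping forced by the generality of the weights: because $w_0,w_1$ are merely positive and measurable, possibly degenerate or unbounded, I must verify that $\bigl(L^2(\Gamma),E(\rho)\bigr)$ is genuinely an admissible interpolation couple and that the pointwise‑optimal decomposition used for $K_2$ is admissible — measurable, and with finite norm in each endpoint space. Both follow from $\rho>0$ a.e.\ together with the elementary bound $\tfrac{t^2\rho}{1+t^2\rho}\le\min(1,t^2\rho)$. Everything else — the pointwise minimization, the application of Tonelli's theorem, and the scalar $t$‑integral — is routine; the equivalence of $K_2$ with the standard $K$‑functional for Hilbert endpoint spaces should be recalled but is classical.
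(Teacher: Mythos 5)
Your proof is correct: the reduction to the couple $(L^2(\Gamma),E(\rho))$ with $\rho=w_1/w_0$, the pointwise computation of the quadratic $K$-functional $K_2(t,f)^2=\int_\Gamma \frac{t^2\rho}{1+t^2\rho}|f|^2\,dx$, and the scalar integral $\int_0^\infty s^{-\theta}(1+s)^{-1}\,ds=\pi/\sin(\pi\theta)$ are all accurate, and the admissibility issues you flag are indeed the only points requiring care and are handled by the bound $\tfrac{t^2\rho}{1+t^2\rho}\le\min(1,t^2\rho)$. The paper itself gives no proof but cites Tartar's Lemma 23.1, whose argument is exactly this explicit $K$-functional computation for weighted $L^2$ spaces, so your route coincides with the standard one.
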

\begin{corollary}
\label{cor:Interpolation1}
Let $0<s<1$ be arbitrary. 
Let $A$ be a linear continuous map from $L^2(\Gamma)$ into
$L^2(\Gamma)$ and from $H^1(\Gamma)$ into $L^2(\Gamma)$ with
\begin{equation*}
\begin{aligned}
\|Av\|_{L^2(\Gamma)} &\le M_0 \|v\|_{L^2(\Gamma)}\quad\text{for all } v\in
L^2(\Gamma),
\\
\|\h^{-1}Av\|_{L^2(\Gamma)} &\le M_1 \|v\|_{H^1(\Gamma)}\quad\text{for all } v\in
H^1(\Gamma).
\end{aligned}
\end{equation*}
Then $A$ is a linear map from
$H^{s}(\Gamma)=(H^1(\Gamma),L^2(\Gamma))_{s,2}$ into
$L^2(\Gamma)$ with
\[
	\|\h^{-s}Av\|_{L^2(\Gamma)} \le M_0^{1-s} M_1^s \|v\|_{H^s(\Gamma)}
	\quad\text{for all }v\in H^\frac12(\Gamma).
\]
\end{corollary}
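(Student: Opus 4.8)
The plan is to obtain the estimate by interpolation between the two given endpoint bounds, using the weighted-$L^2$ machinery of Lemmas~\ref{lem:Tartar1} and~\ref{lem:Tartar2}. Set $w_0\equiv 1$ and $w_1=\h^{-2}$, both measurable and positive on $\Gamma$ (recall $\h$ is piecewise constant and strictly positive on each element $\ka\in\mesh$, hence bounded above and below by positive constants on all of $\Gamma$, so these are legitimate weights). By Lemma~\ref{lem:Tartar2}, $(E(w_0),E(w_1))_{s,2}=E(w_s)$ with equivalent norms, where $w_s=w_0^{1-s}w_1^{s}=\h^{-2s}$; that is, the interpolation space is exactly the weighted space $E(\h^{-2s})$ whose norm is $v\mapsto\|\h^{-s}v\|_{L^2(\Gamma)}$. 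On the range side this is what we want: the target norm $\|\h^{-s}Av\|_{L^2(\Gamma)}$ is the $E(w_s)$-norm of $Av$.

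Next I would identify the domain interpolation scale. The hypotheses say $A$ maps $L^2(\Gamma)$ into $E(w_0)=L^2(\Gamma)$ with constant $M_0$, and $H^1(\Gamma)$ into $E(w_1)$ (the statement $\|\h^{-1}Av\|_{L^2(\Gamma)}\le M_1\|v\|_{H^1(\Gamma)}$ is precisely $\|Av\|_{E(w_1)}\le M_1\|v\|_{H^1(\Gamma)}$) with constant $M_1$. So with $E_0=H^1(\Gamma)$, $E_1=L^2(\Gamma)$, $F_0=E(w_1)$, $F_1=E(w_0)=L^2(\Gamma)$, Lemma~\ref{lem:Tartar1} applies with parameter $\theta$ and $p=2$, giving
\[
\|Aa\|_{(F_0,F_1)_{\theta,2}}\le M_1^{1-\theta}M_0^{\theta}\,\|a\|_{(E_0,E_1)_{\theta,2}}.
\]
To match the claimed bound, choose $\theta=1-s$, so that $M_1^{1-\theta}M_0^{\theta}=M_0^{1-s}M_1^{s}$, and note $(E_0,E_1)_{1-s,2}=(H^1(\Gamma),L^2(\Gamma))_{1-s,2}$. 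By the standard reiteration/symmetry property of real interpolation, $(H^1(\Gamma),L^2(\Gamma))_{1-s,2}=(L^2(\Gamma),H^1(\Gamma))_{s,2}=H^s(\Gamma)$, which is exactly the definition of $H^s(\Gamma)$ used in the statement. On the range side, $(F_0,F_1)_{\theta,2}=(E(w_1),E(w_0))_{1-s,2}=(E(w_0),E(w_1))_{s,2}=E(\h^{-2s})$ by symmetry of interpolation together with Lemma~\ref{lem:Tartar2}. Chaining these identifications yields $\|\h^{-s}Av\|_{L^2(\Gamma)}\preceq M_0^{1-s}M_1^{s}\|v\|_{H^s(\Gamma)}$ for all $v\in H^s(\Gamma)$, and in particular for $v\in H^{1/2}(\Gamma)$ as stated; the implied constant comes only from the norm equivalence in Lemma~\ref{lem:Tartar2} and can be absorbed (or, if one tracks it, equals $1$ under the natural normalisations).

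The only genuinely delicate point is the bookkeeping of which space sits in which slot of the interpolation functor and the repeated use of the symmetry $(X,Y)_{\theta,p}=(Y,X)_{1-\theta,p}$; once the endpoints are lined up as above there is no real obstacle, since Lemma~\ref{lem:Tartar2} is stated with no regularity assumption on the weights and $\h$ is a bona fide bounded positive weight. A minor caution is that the corollary's conclusion is stated "for all $v\in H^{1/2}(\Gamma)$" whereas the argument gives it for all $v\in H^s(\Gamma)$; since $H^{1/2}(\Gamma)\subset H^s(\Gamma)$ for $s\le 1/2$ and more generally the estimate holds on the full interpolation space, this is harmless and in fact the stronger statement on $H^s(\Gamma)$ is what one proves.
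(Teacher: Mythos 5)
Your proposal is correct and follows exactly the route the paper intends: its proof is the one-line "combine Lemma \ref{lem:Tartar1} with Lemma \ref{lem:Tartar2}", and you have simply carried out that combination explicitly (weights $w_0\equiv1$, $w_1=\h^{-2}$, symmetry of the real interpolation functor, and the harmless norm-equivalence constant). Your closing remarks about the slot bookkeeping and about the conclusion holding on all of $H^s(\Gamma)$ rather than just $H^{1/2}(\Gamma)$ are accurate and do not change the argument.
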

\begin{proof}
	Combine Lemma \ref{lem:Tartar1} with Lemma \ref{lem:Tartar2}.
\end{proof}

\subsection{Discrete spaces and interpolation}
Let $\RT(\ka)$ denote the {\it local} Raviart-Thomas space of complex-valued functions on $\ka\in\mesh$ defined by (cf. \cite{Ern:2004p120,Raviart:1977p2571})
\[
	\RT(\ka):=\set{\bm v(\bm x) =\bm \alpha +\beta \bm x }{\bm\alpha\in \complex^2, \beta\in\complex}.
\]
The {\it global} Raviart-Thomas space is defined by
\[
	\RT := \set{\bm v\in \Hdivp0}{ {\bm v}_{|\ka} \in \RT(\ka)\quad \forall \ka\in \mesh},
\]
where $\Hdivp0$ is defined in a standard manner
\[
	\Hdivp0:=\set{\bm v\in \Lt}{\div \bm v\in L^2(\Gamma)}.
\]
Further denote by $V_\mesh$ the space of scalar complex-valued
continuous functions that are piecewise linear, namely
\begin{equation}\label{pw-linears}
	\VP = \set{v\in \H{1}}{v|_K\in \mathbb P_1(\ka)},
\end{equation}
where $\mathbb P_1(\ka)$ denotes the space of affine polynomials on $\ka$.
Let $\nodes{\mesh}$ denote the set of all nodes $\nu$ of $\mesh$
and $\{\varphi_\nu\}_{\nu\in\nodes{\mesh}}$ be the family of nodal bases of $\VP$.
\begin{definition}
	\label{def:interpolationVh}
Let the Cl\'ement type interpolation operator $\Ialpha:L^2(\Gamma)\rightarrow \VP$ be\looseness=-1
	\[
		\Ialpha v := \sum_{\nu\in\nodes{\mesh}} \phi_\nu(v)\varphi_\nu\qquad\forall v\in L^2(\Gamma)
	\]
	where $\Gamma_\nu = {\rm supp} (\varphi_\nu)$ and the degrees of freedom are given by
	\[
		\phi_\nu(v):= \frac{3}{|\Gamma_\nu|}\int_{\Gamma_\nu} v(\x) \varphi_\nu(\x)\,d\x.
	\]
\end{definition}
\begin{proposition} If $v\in\H{s}$ with $0<s<1$, 
then the interpolation operator $\Ialpha$ satisfies the following
interpolation properties 
	\begin{align}
		\label{eq:Interp2}
		\nlt{\h^{-s}(v-\Ialpha v)}{\Gamma} \preceq \nH{s}{ v}
                \quad\text{for all } v\in\H{s}.
	\end{align}
\end{proposition}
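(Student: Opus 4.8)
The plan is to follow the classical Cl\'ement interpolation argument, combined with the interpolation results of the previous subsection. First I would record the two endpoint estimates for the operator $\Ialpha$: the $L^2$-stability
\[
	\nlt{\Ialpha v}{\Gamma} \preceq \nlt{v}{\Gamma}\qquad \forall v\in L^2(\Gamma),
\]
which follows from the definition of the degrees of freedom $\phi_\nu$ together with a Cauchy--Schwarz estimate $|\phi_\nu(v)|\le \tfrac{3}{|\Gamma_\nu|}\nlt{v}{\Gamma_\nu}\nlt{\varphi_\nu}{\Gamma_\nu}$, the bound $\nlt{\varphi_\nu}{\Gamma_\nu}^2\preceq |\Gamma_\nu|$, a comparison $|\Gamma_\nu|\simeq h_\nu^2$ by shape-regularity, and the finite-overlap property of the patches $\Gamma_\nu$; and the first-order approximation estimate
\[
	\nlt{\h^{-1}(v-\Ialpha v)}{\Gamma} \preceq \nH{1}{v}\qquad \forall v\in H^1(\Gamma),
\]
which is the standard local Bramble--Hilbert / Poincar\'e argument: on each element $\ka$ one adds and subtracts a suitable mean value, uses that $\Ialpha$ reproduces constants, bounds $\nlt{v-c}{\ka}$ and the patch contributions by $h_\ka\nHbT{1}{v}$ on the enlarged patch, divides by $h_\ka$, and sums with finite overlap.

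Second, I would set $A:=\mathrm{Id}-\Ialpha$. By the two estimates just established, $A$ maps $L^2(\Gamma)$ into $L^2(\Gamma)$ with $\nlt{Av}{\Gamma}\le M_0\nlt{v}{\Gamma}$ (here $M_0\le 1+C$ from $L^2$-stability), and maps $H^1(\Gamma)$ into $L^2(\Gamma)$ with $\nlt{\h^{-1}Av}{\Gamma}\le M_1\nH{1}{v}$. Thus the hypotheses of Corollary \ref{cor:Interpolation1} are met. Applying that corollary with $\theta=s$ and using $H^s(\Gamma)=(H^1(\Gamma),L^2(\Gamma))_{s,2}$ with equivalent norms immediately yields
\[
	\nlt{\h^{-s}(v-\Ialpha v)}{\Gamma}=\nlt{\h^{-s}Av}{\Gamma}\le M_0^{1-s}M_1^s\,\nH{s}{v}\preceq \nH{s}{v},
\]
which is exactly \eqref{eq:Interp2}.

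The two routine endpoint estimates are where all the real work sits, and the genuinely delicate point among them is the $H^1\to L^2$ approximation bound: one must be careful that the weight $\h$ is genuinely piecewise constant so that $\h^{-1}$ commutes with the elementwise decomposition, and that the local Poincar\'e constants are uniform, which is exactly what shape-regularity of the family $\mesh$ buys. A second mild subtlety is that Corollary \ref{cor:Interpolation1} as stated requires the weight $\h$ to be an admissible positive measurable function on $\Gamma$ with $\h^{-s}=(\h^{-1})^{s}(1)^{1-s}$; since $\h$ is piecewise constant and bounded above and below on a fixed mesh this is immediate, and the interpolation-of-weighted-spaces machinery (Lemma \ref{lem:Tartar2}) applies without any regularity assumption on $\h$. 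Once those are in place, the passage from the integer endpoints to fractional $s$ is purely the abstract interpolation step and carries no further difficulty. I would also remark that the same argument gives the companion estimate $\nlt{\h^{1-s}\grad(v-\Ialpha v)}{\Gamma}\preceq\nH{s}{v}$ if needed later, but for the stated proposition only \eqref{eq:Interp2} is required.
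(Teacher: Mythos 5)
Your argument is correct and follows essentially the same route as the paper: establish the two endpoint bounds ($L^2$-stability and the $\h^{-1}$-weighted $H^1\to L^2$ approximation estimate) for $A=\mathrm{Id}-\Ialpha$ and then invoke Corollary \ref{cor:Interpolation1} with $\theta=s$. The only difference is that you sketch direct proofs of the endpoint estimates (Cauchy--Schwarz on the $\phi_\nu$, reproduction of constants, patchwise Poincar\'e), whereas the paper cites \cite{Demlow:2007p1912} and Cl\'ement's original argument for these local bounds; the interpolation step is identical.
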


\begin{proof}
	This interpolation operator $\Ialpha$ is also used in \cite{Demlow:2007p1912} and the following result is proven
	\[
		\nlt{\h^{-1}(v-\phi_\nu(v))}{\Gamma_\nu} \preceq  \nLt{\grad v }{\Gamma_\nu}
	\]	
	for $v\in H^1(\Gamma)$ under the assumption of
        shape-regularity \cite[(2.2.29)]{Demlow:2007p1912} (Note that
        in our case the mesh matches the surface and therefore
        equation (2.2.29) can be simplified). 
        Following the arguments of the original paper of Cl\'ement
        \cite[Proof of Theorem 1]{Clement:1975p227},
	it is now straightforward to prove that
	\[
		\nlt{\h^{-1}(v-\Ialpha v)}{\ka} \preceq \sum_{\nu\in \nodes{T}} \nLt{\grad v }{\Gamma_\nu},
	\]
	where $\nodes{T}$ denotes the set of nodes of the element
        $\ka$. Now, summing over all elements of the mesh $\mesh$ and
        using that the number of elements sharing a node is
          bounded, as a consequence of shape regularity of $\mesh$, we get
	\[
		\nlt{\h^{-1}(v-\Ialpha v)}{\Gamma} \preceq  | v |_{H^1(\Gamma)}.
	\]
	Furthermore, the operator can also be shown to be $L^2$-stable \cite[(2.2.33)]{Demlow:2007p1912}.
	
	Therefore, the linear continuous operator $A_{\mesh}={\rm Id}-\Ialpha :L^2(\Gamma)\mapsto L^2(\Gamma)$ satisfies
	\begin{align*}
		\nlt{\h^{-1}(v-\Ialpha v)}{\Gamma} &\preceq  \| v \|_{H^1(\Gamma)}, \\
		\nlt{v-\Ialpha v}{\Gamma} &\preceq  \nlt{ v }{\Gamma}.
	\end{align*}
	The asserted estimate \eqref{eq:Interp2} follows from Corollary \ref{cor:Interpolation1}.
\end{proof}

Besides this for $s=1/2$,
we will also need a Raviart-Thomas type interpolation operator for functions in $\v\in\Hortho{}$. Since $\div \v\notin L^2(\Gamma)$, the standard degrees of freedom
are no longer well-defined. Therefore, we will utilize an interpolation operator similar to that introduced in \cite{Bernardi:2007p3092} for the first type N\'ed\'elec elements. 

For any edge $e\in\edges$ of the mesh we associate an arbitrary but
fixed element $\ka_e$ such that $e\subset \partial \ka_e$. On $\ka_e$
we denote by $\bm\pi_e$ the $\bm L^2(\ka_e)$-projection onto constant
functions. We let $\{\bm\psi_e\}_{e\in\edges}$ be the standard
Raviart-Thomas basis of lowest order, sometimes also referred to as the Rao-Wilton-Glisson (RWG) basis in this context of electromagnetic scattering, such that
\[
	\int_e \bm\psi_e\cdot \bm\nu_e \de=1\qquad\text{and}\qquad\int_e \bm\psi_{e'}\cdot \bm\nu_e \de=0
\]
for any $e'\in\edges$ such that $e\neq e'$ and where $\bm\nu_e$
denotes the outer unit normal of $\ka_e$ at the edge $e$ which is
coplanar with $T_e$; see Fig \ref{fig:dof}.

\begin{definition}
	\label{def:interpolationRT}
Let the Cl\'ement type interpolation operator
  $\Ipsi:\Lt\rightarrow \RT$ for the Raviart-Thomas element of lowest
  order be given by 
	\[
	\Ipsi \v:=\sum_{e\in\edges}\alpha_e(\v) \bm\psi_e
	\]
	where the degrees of freedom are defined by
	\[
		\alpha_e(\v):=\int_e \bm\pi_e(\v)\cdot \bm\nu_e \de.
	\]
\end{definition}
\begin{remark}
	Note that $\Ipsi$ does not satisfy the usual commutative property
	\[
		\div (\Ipsi\v)\neq  {\rm P}_0 (\div \v),
	\]
	where $ {\rm P}_0$ denotes the element-wise $L^2$-projection
        of degree 0.
        This is important in the a priori analysis
	but not in the upcoming a posteriori error analysis.
\end{remark}

\begin{figure}
\centering
   \def\svgwidth{0.6\textwidth}
     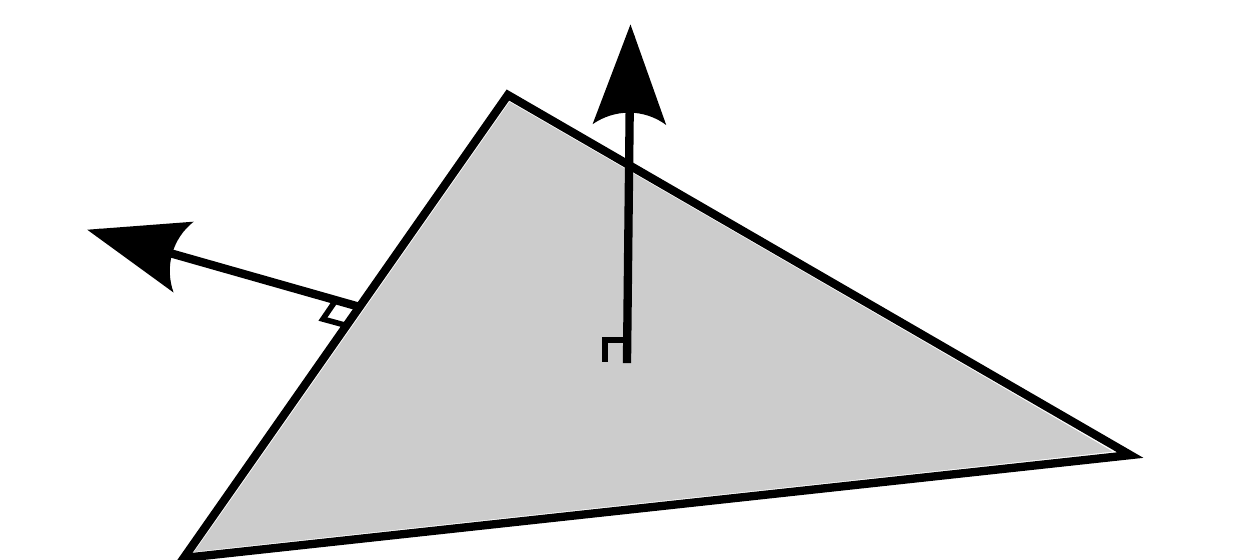
     \label{fig:dof}
     \caption{Illustration of the normals on an element $\ka_e$.}
\end{figure}


\begin{lemma}
	\label{lem:WPRTIO}
	The degrees of freedom of the interpolation operator
        $\Ipsi:\Lt\rightarrow \RT$ are well-defined and $\Ipsi$
          satisfies the local $L^2$-stability bound
	\[
		\nLt{\Ipsi\v}{\ka}
		\le c \,\nLt{\v}{\Delta_\ka}
                \quad\text{for all } T\in\mesh,
	\]
	where $\Delta_\ka$ denotes the set of elements that share at least one edge with $\ka$.
\end{lemma}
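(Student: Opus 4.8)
The plan is to exploit that the degrees of freedom are built on the elementwise average $\bm\pi_e$, which removes any need for edge traces of $\bm L^2$ fields. First I would note that, since $\bm\pi_e$ is the $\bm L^2(\ka_e)$-projection onto constants, one simply has $\bm\pi_e(\v)=|\ka_e|^{-1}\int_{\ka_e}\v\,\ds$, a constant tangential vector that is well defined for every $\v\in\Lt$; its restriction to the edge $e\subset\partial\ka_e$ is this same constant, so
\[
\alpha_e(\v)=\int_e \bm\pi_e(\v)\cdot\bm\nu_e\de=|e|\,\bm\pi_e(\v)\cdot\bm\nu_e
\]
is a well-defined complex number. As $\{\bm\psi_e\}_{e\in\edges}$ is a basis of $\RT$, it follows that $\Ipsi\v\in\RT$, which settles the first assertion.

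For the stability bound, fix $T\in\mesh$ and write $\Ipsi\v|_\ka=\sum_{e\in\eT}\alpha_e(\v)\,\bm\psi_e|_\ka$, the sum running over the three edges of $\ka$, so that $\nLt{\Ipsi\v}{\ka}\le\sum_{e\in\eT}|\alpha_e(\v)|\,\nLt{\bm\psi_e}{\ka}$ by the triangle inequality. Each restriction $\bm\psi_e|_\ka$ is, up to sign, the standard lowest-order Raviart--Thomas (RWG) shape function on the flat triangle $\ka$ normalized to unit flux through $e$; a routine affine-equivalence and scaling argument on the reference triangle, combined with the shape-regularity of $\mesh$, yields $\nLt{\bm\psi_e}{\ka}\preceq 1$.

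It remains to bound the degrees of freedom. Cauchy--Schwarz gives $|\bm\pi_e(\v)|\le|\ka_e|^{-1/2}\nLt{\v}{\ka_e}$, whence
\[
|\alpha_e(\v)|=|e|\,|\bm\pi_e(\v)\cdot\bm\nu_e|\le |e|\,|\ka_e|^{-1/2}\,\nLt{\v}{\ka_e}\preceq \nLt{\v}{\ka_e},
\]
using $|e|\simeq h_{\ka_e}$ and $|\ka_e|\simeq h_{\ka_e}^2$ from shape-regularity. Since each $\ka_e$ associated with an edge $e\in\eT$ shares that edge with $\ka$, it is contained in $\Delta_\ka$, so $\nLt{\v}{\ka_e}\le\nLt{\v}{\Delta_\ka}$; combining the three bounds and summing over the (at most three) edges of $\ka$ gives $\nLt{\Ipsi\v}{\ka}\preceq\nLt{\v}{\Delta_\ka}$.

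I do not expect a genuine obstacle here. The only step deserving care is the scaling estimate $\nLt{\bm\psi_e}{\ka}\preceq 1$, i.e. checking that the unit-flux normalization of the RWG basis produces an $\bm L^2$ norm bounded uniformly under shape-regular refinement — this is the classical Piola/scaling argument for $\RT_0$, applied face by face since every triangle of $\mesh$ is flat. One should also record that the coplanarity of $\bm\nu_e$ with $\ka_e$ makes $\bm\pi_e(\v)\cdot\bm\nu_e$ meaningful even when $e$ lies on a crease of the polyhedron, where $\ka_e$ and its neighbour across $e$ live in different supporting planes.
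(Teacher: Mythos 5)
Your proposal is correct and follows essentially the same route as the paper: triangle inequality over the three edge contributions, a Piola/scaling argument giving $\nLt{\bm\psi_e}{\ka}\preceq 1$, and a Cauchy--Schwarz plus shape-regularity bound $|\alpha_e(\v)|\preceq\nLt{\v}{\ka_e}$ with $\ka_e\subset\Delta_\ka$. The only (harmless) difference is that you bound the degree of freedom directly from the explicit average $\bm\pi_e(\v)=|\ka_e|^{-1}\int_{\ka_e}\v\ds$, whereas the paper passes through the reference element; both yield the same estimate.
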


\begin{proof}
The argument is similar to \cite{Bernardi:2007p3092}. If 
$\ka\in\mesh$ is an arbitrary but fixed element and $\eT$ 
denotes the three edges of $T$, then
\begin{equation*}
	\nLt{\Ipsi\v}{\ka} = \nLt{\sum_{e\in\eT}\alpha_e\bm\psi_{e}}{\ka}
	\le \sum_{e\in\eT}\nLt{\alpha_e\bm\psi_{e}}{\ka}
	= \sum_{e\in\eT}|\alpha_e|\,\nLt{\bm\psi_{e}}{\ka}.
\end{equation*}
Invoking the Piola transformation, it can be shown that
\begin{equation*}
	\nLt{\bm\psi_{e}}{\ka} 
	\le c\,\nLt{\widehat{\bm\psi}_{\hat e}}{\hat \ka} 
	\le c
\end{equation*}
since the basis functions $\widehat{\bm\psi}_{\hat e}$ on the
reference element $\widehat \ka$ are uniformly bounded independent on
$\ka$. Moreover, applying the Cauchy-Schwarz inequality, we get

\[
	|\alpha_e|=\left|\int_{e} \bm\pi_{e}(\v)\cdot \bm\nu_{e} \de\right|\,
	\le c h_\ka^\frac12 \,\nLt{\bm\pi_{e}\v}{e}
	\le c h_\ka \nLt{\hat{\bm\pi}_{e}\hat\v}{\hat e}
\]
where $\hat{\bm\pi}_e$ denotes the $\bm L^2(\widehat \ka)$-projection
onto constant functions on the reference element $\widehat T$. 
Note
that $\hat{\bm\pi}_e\hat\v$ is defined via the affine transformation
from $\ka_e$ (and not $\ka$) to $\hat\ka$.
Norm equivalence of polynomials (constant functions in this case),
the $L^2$-stability of $\hat{\bm\pi}_e$ and a scaling argument yield
\begin{equation}
	\label{eq:bb2}
	|\alpha_e|\,
	\le c h_\ka \nLt{\hat{\bm\pi}_e\hat\v}{\widehat \ka}
	\le c h_\ka \nLt{\hat\v}{\widehat \ka}
	\le c \,\nLt{\v}{\ka_e}.
\end{equation}
Combining the above estimates implies the asserted stability bound of $\Ipsi$.
\end{proof}

To explore the accuracy of the interpolant $\Ipsi$, we need the following lemmas.

\begin{lemma}[Local approximability]
\label{lem:locapp}
For any $\bm c\in\real^3$, we have the error estimate
	\[
		\nLt{v-\Ipsi\v}{\ka}
		\preceq\nLt{\v-\gtrace\bm c}{\Delta_\ka}, 
                \quad\text{for all } \ka\in\mesh,
	\]
        where $\Delta_\ka$ denotes the set of elements that share at least one edge with $\ka$.
\end{lemma}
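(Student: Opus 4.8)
The plan is to exploit the fact that $\Ipsi$ is a projection onto $\RT$ restricted to a suitable patch, and to combine this with the local $L^2$-stability bound of Lemma~\ref{lem:WPRTIO}. The key observation is that for any constant vector $\bm c\in\real^3$, the tangential trace $\gtrace\bm c$ restricted to a single face $\Gamma_j$ equals $\bm c\times\n_j$, which is a constant tangential field on that face, hence lies in $\RT(\ka)$ for every element $\ka\subset\Gamma_j$. Therefore $\gtrace\bm c$ is a piecewise (face-by-face) element of the local Raviart-Thomas spaces, and if $\ka$ together with its edge-neighbours $\Delta_\ka$ all lie in the interior of a single face, then $\Ipsi(\gtrace\bm c)|_\ka = \gtrace\bm c|_\ka$ by the reproduction property of the degrees of freedom $\alpha_e$ applied to constant fields. (If $\Delta_\ka$ straddles an edge of the polyhedron one has to be slightly more careful, since $\gtrace\bm c$ is then only piecewise constant on $\Delta_\ka$; but $\alpha_e(\gtrace\bm c)$ only ever sees the values on $\ka_e$, which is one fixed element, so the same reproduction argument goes through edge by edge.)

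Granting the reproduction property $\Ipsi(\gtrace\bm c)|_\ka = \gtrace\bm c|_\ka$, I would write
\[
  \v - \Ipsi\v = (\v - \gtrace\bm c) - \Ipsi(\v-\gtrace\bm c)\quad\text{on }\ka,
\]
and then estimate
\[
  \nLt{\v-\Ipsi\v}{\ka}
  \le \nLt{\v-\gtrace\bm c}{\ka} + \nLt{\Ipsi(\v-\gtrace\bm c)}{\ka}
  \le \nLt{\v-\gtrace\bm c}{\ka} + c\,\nLt{\v-\gtrace\bm c}{\Delta_\ka},
\]
where the second inequality is exactly the local stability bound of Lemma~\ref{lem:WPRTIO} applied to $\v-\gtrace\bm c$. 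Since $\ka\subset\Delta_\ka$, the first term is absorbed into the second, yielding the claimed estimate $\nLt{\v-\Ipsi\v}{\ka}\preceq\nLt{\v-\gtrace\bm c}{\Delta_\ka}$. The constant is independent of $\ka$ because the stability constant in Lemma~\ref{lem:WPRTIO} is, and because the reproduction property is exact.

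I expect the main obstacle to be the careful verification of the reproduction property across edges of the polyhedron, i.e.\ checking that $\alpha_e(\gtrace\bm c)$ really does recover the correct coefficient when $\gtrace\bm c$ is only piecewise constant on the patch. The point to make is that $\alpha_e(\v)=\int_e \bm\pi_e(\v)\cdot\bm\nu_e\de$ is computed solely from the restriction of $\v$ to the single designated element $\ka_e$, on which $\gtrace\bm c$ is a genuine constant tangential field, so $\bm\pi_e(\gtrace\bm c) = \gtrace\bm c|_{\ka_e}$ and the standard Raviart-Thomas unisolvence on $\ka_e$ gives $\sum_{e\in\eT}\alpha_e(\gtrace\bm c)\bm\psi_e = \gtrace\bm c$ on $\ka$ whenever $\ka$ and all three $\ka_e$ lie on a common face; the genuinely cross-edge case is handled by noting that the RWG basis functions $\bm\psi_e$ associated to the edges of $\ka$ are themselves supported on $\ka$ (and its across-edge neighbour), so the identity is local to $\ka$ and only the values of $\gtrace\bm c$ on elements touching $\ka$ matter — all of which sit inside $\Delta_\ka$. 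Once this is in place the rest is the two-line triangle-inequality argument above.
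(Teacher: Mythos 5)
Your overall strategy coincides with the paper's: show that $\Ipsi$ reproduces $\gtrace\bm c$ on $\ka$, then conclude with the triangle inequality and the local $L^2$-stability of Lemma \ref{lem:WPRTIO}; that closing two-line estimate is exactly the paper's. The gap sits precisely at the point you yourself flag as the main obstacle: the reproduction property when a designated element $\ka_e$ lies on a different face of $\Gamma$ than $\ka$. Your justification --- that $\bm\psi_e$ is supported on $\ka$ and its across-edge neighbour, so ``only the values of $\gtrace\bm c$ on elements touching $\ka$ matter'' --- does not address the actual issue. What must be verified is that the coefficient $\alpha_e(\gtrace\bm c)=\int_e\bm\pi_e(\gtrace\bm c)\cdot\bm\nu_e\de$, computed from the restriction $\gtrace\bm c|_{\ka_e}=\bm c\times\n_j$, agrees (with the sign convention built into $\bm\psi_e$) with the flux of $\gtrace\bm c|_{\ka}=\bm c\times\n_i$ through $e$ seen from $\ka$'s side; only then do the three edge degrees of freedom of $\Ipsi(\gtrace\bm c)|_\ka$ coincide with those of $\gtrace\bm c|_\ka$, and unisolvence on $\ka$ gives reproduction. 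Face-by-face membership $\gtrace\bm c|_\ka\in\RT(\ka)$ is not enough: for a generic tangential field that is merely constant on each face, reproduction across an edge of the polyhedron fails, because the coefficient read off from the wrong side carries the wrong flux.

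The missing ingredient --- the one the paper invokes --- is the continuity relation \eqref{continuity}: the component of $\gtrace\bm c$ normal to any edge $e_{ij}$ of $\Gamma$ is continuous, since $(\bm c\times\n_i)\cdot\bm\tau_i=\bm c\cdot\bm\tau_{ij}=(\bm c\times\n_j)\cdot\bm\tau_j$. Hence $\bar{\v}=\gtrace\bm c$ has continuous normal components across all mesh edges, including those lying on edges of the polyhedron, so $\bar{\v}\in\RT$ globally; then $\bm\pi_e\bar{\v}|_{\ka_e}=\bar{\v}|_{\ka_e}$, the value $\alpha_e(\bar{\v})=\int_e\bar{\v}\cdot\bm\nu_e\de$ is the canonical Raviart--Thomas edge flux independently of the side from which it is evaluated, and unisolvence of the three edge degrees of freedom on $\ka$ yields $\Ipsi\bar{\v}|_\ka=\bar{\v}|_\ka$. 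With this inserted, your argument is complete. A minor side remark: your opening claim that $\Ipsi$ is a projection onto $\RT$ is not quite right --- for a non-constant $\vh\in\RT$ the averaged degree of freedom $\alpha_e(\vh)$ need not equal $\int_e\vh\cdot\bm\nu_e\de$ --- but the proof only needs reproduction of piecewise constant members of $\RT$ such as $\gtrace\bm c$, so this does not affect the result.
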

\begin{proof}
	Let $\bar{\v}=\gtrace\bm c$ with $\bm c\in\real^3$
which, in view of \eqref{traces}, is piecewise constant in
  $\mesh$.
According to \eqref{continuity} the normal component of $\bar{\v}$ is
continuous across all edges of the mesh, including those of the
polyhedron $\Gamma$, whence $\bar{\v}\in\RT$.

	We first observe that $\Ipsi \bar{\v}=\bar{\v}$ because 
	$\bm\pi_e\overline\v|_{\ka_e}=\overline\v|_{\ka_e}$ for any
        edge $e\subset\partial \ka$ and
	\[
		\alpha_e(\overline\v)=\int_e \bm\pi_e\overline\v \cdot\bm\nu_e \de
		= \int_e \overline\v\cdot\bm\nu_e\de.
	\]
Since these three local degrees of freedom on $T\in\mesh$ are
unisolvent and they coincide for 
both $\Ipsi\bar\v|_\ka$ and $\bar\v|_\ka$, we deduce 
$\Ipsi\overline\v|_\ka=\overline\v|_\ka$. Consequently
	\[
		\nLt{\v-\Ipsi\v}{\ka}\le \nLt{\v-\overline\v}{\ka} + \nLt{\Ipsi(\v-\overline\v)}{\ka}.
	\]
	By the local $\bm L^2$--stability of Lemma \ref{lem:WPRTIO} we conclude that
	\[
		\nLt{\v-\Ipsi\v}{\ka}
		\preceq\nLt{\v-\overline\v}{\Delta_\ka},
	\]
as asserted.
\end{proof}

\begin{lemma}
	\label{lem:vw}
	If $\v\in\Hortho{}$, then there exists $\w\in\Hb{\frac12}$ such that
	$\tperp(\w)=\v$ and $\nHb{\frac12}{\w}\preceq\nHortho{}{\v}  $.
\end{lemma}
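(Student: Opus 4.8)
The plan is to read off this statement directly from Lemma \ref{lem:ttrace}, since it is essentially a reformulation of the fact that $\tperp$ admits a \emph{continuous} right inverse. Given $\v\in\Hortho{}$, I would set
\[
\w := \tperp^{-1}(\v) = \bm\gamma\bigl(\Rortho(\v)\bigr) \in \Hb{\frac12},
\]
with $\tperp^{-1}$, $\Rortho$ and $\bm\gamma$ the operators from Lemma \ref{lem:ttrace} and Proposition \ref{prop:otrace}. That $\w$ lies in $\Hb{\frac12}$ is immediate, because $\Rortho(\v)\in\Hone$ and $\bm\gamma:[H^1(\Omega)]^3\to\Hb{\frac12}$ is the standard (continuous) trace. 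The identity $\tperp(\w)=\v$ is then exactly the assertion, proved in Lemma \ref{lem:ttrace}, that $\tperp^{-1}$ is a right inverse of $\tperp$, i.e. $\tperp(\tperp^{-1}(\v))=\v$ for all $\v\in\Hortho{}$.

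For the norm bound, I would simply chain the two continuity estimates: $\|\Rortho(\v)\|_{\Hone}\preceq \nHortho{}{\v}$ by Proposition \ref{prop:otrace}, and $\nHb{\frac12}{\bm\gamma(\bm g)}\preceq \|\bm g\|_{\Hone}$ by continuity of $\bm\gamma$. Composing gives
\[
\nHb{\frac12}{\w} = \nHb{\frac12}{\bm\gamma(\Rortho(\v))}\preceq \|\Rortho(\v)\|_{\Hone}\preceq \nHortho{}{\v},
\]
with the hidden constant equal to the product of the operator norms of $\bm\gamma$ and $\Rortho$, both independent of $\v$.

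Since every ingredient is already available, there is no real obstacle here; the only point worth a sentence of care is to observe that Lemma \ref{lem:ttrace} indeed supplies \emph{both} the right-inverse identity and the continuity of $\tperp^{-1}$, so that the present lemma is just a restatement in the language of lifts of a prescribed $\v$. Alternatively, one could give a self-contained one-line argument: take $\bm g=\Rortho(\v)\in\Hone$, put $\w=\bm\gamma(\bm g)$, and use $\gtrace(\bm g)|_{\Gamma_i}=\bm\gamma(\bm g)|_{\Gamma_i}\times\n_i$ together with the face-by-face nature of $\gtrace$ (cf. \eqref{traces}, \eqref{continuity}) to get $\tperp(\w)=\gtrace(\bm g)=\v$, the norm bound being as above.
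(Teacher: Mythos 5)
Your argument is exactly the paper's: set $\w=\tperp^{-1}(\v)=\bm\gamma(\Rortho(\v))$, invoke the right-inverse identity from Lemma \ref{lem:ttrace} for $\tperp(\w)=\v$, and chain the continuity of $\bm\gamma$ and $\Rortho$ for the bound $\nHb{\frac12}{\w}\preceq\nHortho{}{\v}$. The proposal is correct and simply spells out the operator-norm details that the paper's one-line proof leaves implicit.
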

\begin{proof}
Simply set $\w=\tperp^{-1}(\v)$, where $\tperp^{-1}$ is defined
  in Lemma \ref{lem:ttrace}, and use the facts that $\tperp^{-1}$ is the right inverse
  of $\tperp$ and the
  continuity of $\tperp^{-1}$.
\end{proof}

\begin{lemma}
	\label{lem:wL2}
	If $\w\in\Hb{\frac12}$, then $\nLt{\tperp(\w)}{\Delta_\ka}
\preceq \nLt{\w}{\Delta_\ka}$.
\end{lemma}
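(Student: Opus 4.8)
The plan is to exploit that $\tperp$ acts as a pointwise, face-by-face rotation by $\pi/2$, an operation that cannot increase the $\bm L^2$-norm, so that the claimed bound in fact holds with constant one.

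First I would recall the definition $\tperp(\w)=\gtrace(\Rtrace(\w))$ from Lemma~\ref{lem:ttrace} together with the fact, already used in that proof and taken from \cite[p.~36]{Buffa:2001p1125}, that $\gtrace$ is a face-by-face projection: for every $\bm g\in\Hone$ and every face $\Gamma_i$ of $\Gamma$ one has $\gtrace(\bm g)|_{\Gamma_i}=\bm\gamma(\bm g)|_{\Gamma_i}\times\n_i$, with $\n_i=\n|_{\Gamma_i}$ constant. Applying this with $\bm g=\Rtrace(\w)$ and using that $\Rtrace$ is a right inverse of $\bm\gamma$, so $\bm\gamma(\Rtrace(\w))=\w$, yields $\tperp(\w)|_{\Gamma_i}=\w|_{\Gamma_i}\times\n_i$ for $1\le i\le N_F$. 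Hence $\tperp(\w)$ is, on each face of $\Gamma$, simply the cross product of the trace $\w$ with the constant unit normal of that face.

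Next, since $|\n_i|=1$, the elementary identity $|\w\times\n_i|^2=|\w|^2-|\w\cdot\n_i|^2\le|\w|^2$ (valid also for complex vectors, by splitting into real and imaginary parts) holds pointwise almost everywhere on $\Gamma_i$; therefore $|\tperp(\w)(\x)|\le|\w(\x)|$ for a.e.\ $\x\in\Gamma$. Integrating this pointwise bound over the measurable set $\Delta_\ka$ (summing the contributions of the faces of $\Gamma$ that $\Delta_\ka$ meets) gives $\nLt{\tperp(\w)}{\Delta_\ka}\le\nLt{\w}{\Delta_\ka}$, which is the assertion.

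There is no genuinely hard step here: the only point worth a word of care is that $\Delta_\ka$ may intersect several faces of $\Gamma$, but since $|\tperp(\w)|\le|\w|$ is pointwise and holds on every face, this causes no difficulty. The hypothesis $\w\in\Hb{\frac12}$ enters only to make $\Rtrace(\w)$ — and hence $\tperp(\w)$ — well defined; the estimate itself is a pure $\bm L^2$ statement and does not use the fractional regularity quantitatively.
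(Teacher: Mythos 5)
Your proof is correct and follows essentially the same route as the paper's: write $\tperp(\w)=\gtrace(\Rtrace\w)$, use the face-by-face identity $\gtrace(\bm g)=\bm\gamma(\bm g)\times\n$ together with $\bm\gamma(\Rtrace\w)=\w$, and bound the cross product with the unit normal in $\bm L^2$ over $\Delta_\ka$. Your observation that the pointwise inequality $|\w\times\n|\le|\w|$ gives the estimate with constant one is a slight sharpening of the paper's $\preceq$, but the argument is the same.
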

\begin{proof}
	As in the proof of Lemma \ref{lem:ttrace}, let $\bm g\in\Hone$ be the function such that $\bm g=\Rtrace\w$ and $\tperp(\w)=\gtrace(\bm g)$. Therefore 
\[
	\gtrace(\bm g)|_{T} = \bm\gamma(\bm g)|_{T}\times\n ,
\quad \text{for a.e. } \x\in T,\text{ for all } T\in \mesh,
\]
whence
\begin{align*}	
	\nLt{\tperp(\w)}{\Delta_\ka}^2
	&= \sum_{\ka\subset\Delta_\ka}\nLt{\tperp(\w)}{\ka}^2
	= \sum_{\ka\subset\Delta_\ka}\nLt{\bm\gamma(\bm g)\times\n}{\ka}^2\\
	&\preceq \sum_{\ka\subset\Delta_\ka}\nLt{\bm\gamma(\bm g)}{\ka}^2
	= \nLt{\bm\gamma(\bm g)}{\Delta_\ka}^2
	= \nLt{\w}{\Delta_\ka}^2
\end{align*}
because $\Rtrace$ is the right inverse of $\bm\gamma$ and thus
$\gamma(\bm g)=\w$.
\end{proof}

\begin{proposition}[Global approximability]
If $\v\in\Hortho{}$, then the interpolation operator $\Ipsi$
satisfies the following global error estimate
	\begin{align}
		\label{eq:Interp1}
		\nLt{\h^{-\frac12}(\v - \Ipsi \v)}{\Gamma} \preceq
                \nHortho{}{ \v} \quad\text{for all }\v\in\Hortho{}.
	\end{align}
\end{proposition}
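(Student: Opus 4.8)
The plan is to combine the local approximability estimate of Lemma~\ref{lem:locapp} with a localized interpolation estimate in $\bm H^{\frac12}$ for the smooth vector field $\w$ supplied by Lemma~\ref{lem:vw}, then sum over the mesh. First I would fix $\v\in\Hortho{}$ and, by Lemma~\ref{lem:vw}, pick $\w\in\Hb{\frac12}$ with $\tperp(\w)=\v$ and $\nHb{\frac12}{\w}\preceq\nHortho{}{\v}$. For a fixed element $\ka\in\mesh$ I would apply Lemma~\ref{lem:locapp} with the constant $\bm c=\bm c_\ka$ chosen so that $\gtrace\bm c_\ka$ best approximates $\v$ on the patch $\Delta_\ka$; concretely, since $\gtrace\bm c = (\bm c\times\n)|_\Gamma$ is piecewise constant, I would take $\bm c_\ka$ so that the piecewise-constant field $\gtrace\bm c_\ka$ equals, on each face, the tangential rotation of the $\bm L^2(\Delta_\ka)$-average of $\w$ (transported through $\bm\gamma$ and $\times\n$). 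Then Lemma~\ref{lem:wL2} turns $\nLt{\v-\gtrace\bm c_\ka}{\Delta_\ka}$ into $\nLt{\w-\overline{\w}_{\Delta_\ka}}{\Delta_\ka}$ up to a constant, where $\overline{\w}_{\Delta_\ka}$ is a suitable constant.

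Next I would invoke a standard fractional Bramble--Hilbert / Poincaré-type estimate on the shape-regular patch $\Delta_\ka$: for $\w\in\bm H^{\frac12}(\Delta_\ka)$ there is a constant vector $\overline{\w}_{\Delta_\ka}$ with
\[
	\nLt{\w-\overline{\w}_{\Delta_\ka}}{\Delta_\ka}
	\preceq h_\ka^{\frac12}\,|\w|_{\bm H^{\frac12}(\Delta_\ka)},
\]
which follows by scaling to a reference patch, the $\bm H^{\frac12}$ Poincaré inequality there, and shape-regularity (so that $h_\ka$ is comparable to the diameter of $\Delta_\ka$). Combining with Lemmas~\ref{lem:locapp} and~\ref{lem:wL2} gives the local bound
\[
	\nLt{\h^{-\frac12}(\v-\Ipsi\v)}{\ka}
	= h_\ka^{-\frac12}\nLt{\v-\Ipsi\v}{\ka}
	\preceq h_\ka^{-\frac12}\nLt{\w-\overline{\w}_{\Delta_\ka}}{\Delta_\ka}
	\preceq |\w|_{\bm H^{\frac12}(\Delta_\ka)}.
\]

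Finally I would square and sum over $\ka\in\mesh$. The right-hand side becomes $\sum_{\ka}|\w|^2_{\bm H^{\frac12}(\Delta_\ka)}$, and since each patch $\Delta_\ka$ overlaps only a bounded number of others (shape-regularity), this is $\preceq |\w|^2_{\bm H^{\frac12}(\Gamma)}$ — here one uses the intrinsic double-integral form of the $\bm H^{\frac12}$-seminorm so that the patch seminorms genuinely add up with finite overlap, or, more carefully, one sums the face-broken $\bm H^{\frac12}_-$ contributions plus the edge terms $\mathcal N^\perp_{ij}$ encoded in $\nHb{\frac12}{\w}$. This yields $\nLt{\h^{-\frac12}(\v-\Ipsi\v)}{\Gamma}\preceq\nHb{\frac12}{\w}\preceq\nHortho{}{\v}$, which is \eqref{eq:Interp1}. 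The main obstacle is the localized fractional estimate and the summation step: unlike the integer-order case, the $\bm H^{\frac12}$-seminorm is nonlocal, so one must be careful that the sum of the patchwise fractional seminorms is controlled by the global one with a constant depending only on the shape-regularity constant and $N_F$; handling this cleanly (either via the Gagliardo double integral with finite overlap, or by first passing to $\w=\tperp^{-1}(\v)$ and an $\bm H^1(\Omega)$ lifting so that only integer-order local estimates on the lifted patch are needed) is where the real care lies.
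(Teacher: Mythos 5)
Your proposal is correct, but it takes a genuinely different route at the crucial fractional step. The local setup is the same as the paper's: pick $\w=\tperp^{-1}(\v)$ via Lemma \ref{lem:vw}, take $\bm c_\ka=\overline{\w}_\ka$ (the patch average) so that $\gtrace\bm c_\ka=\tperp(\overline{\w}_\ka)$, and pass from $\v$ to $\w$ with Lemmas \ref{lem:locapp} and \ref{lem:wL2}. The divergence is afterwards: the paper never localizes a fractional seminorm. It proves the two integer-order endpoint bounds $\nLt{\v-\Ipsi\v}{\Gamma}\preceq\nLt{\w}{\Gamma}$ and $\nLt{\h^{-1}(\v-\Ipsi\v)}{\Gamma}\preceq\|\w\|_{\bm H^1(\Gamma)}$ (where patch summation is trivial by finite overlap) and then obtains the $s=\tfrac12$ estimate abstractly from the weighted-space interpolation result, Corollary \ref{cor:Interpolation1} (Tartar's Lemmas \ref{lem:Tartar1}--\ref{lem:Tartar2}), the same tool already used for the scalar operator $\Ialpha$ in \eqref{eq:Interp2}. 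You instead work directly at order $\tfrac12$ via a patchwise fractional Poincar\'e inequality and superadditivity of the Gagliardo seminorm. Both delicate points you flag do go through: the estimate $\nLt{\w-\overline{\w}_\ka}{\Delta_\ka}\preceq h_\ka^{1/2}|\w|_{\bm H^{1/2}(\Delta_\ka)}$ follows from a Jensen-type argument, since $\|\w-\overline{\w}\|^2_{L^2(D)}\le |D|^{-1}\int_D\int_D|\w(\x)-\w(\y)|^2\,d\x\,d\y\le (\mathrm{diam}\,D)^3|D|^{-1}|\w|^2_{H^{1/2}(D)}$ with $|D|\approx h_\ka^2$, $\mathrm{diam}\,D\approx h_\ka$ by shape regularity; and the sum $\sum_\ka|\w|^2_{\bm H^{1/2}(\Delta_\ka)}\preceq\|\w\|^2_{\bm H^{1/2}(\Gamma)}$ holds because any pair $(\x,\y)$ belongs to at most a bounded number of products $\Delta_\ka\times\Delta_\ka$, provided one also uses that the Sobolev--Slobodeckij double-integral norm with Euclidean distance (the paper's convention \eqref{eq:half_rel}) is an equivalent norm on $\bm H^{1/2}(\Gamma)$ for the polyhedral surface. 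What each approach buys: the paper's keeps every local estimate at integer order and reuses machinery already in place, at the cost of the abstract weighted-interpolation argument; yours is self-contained at $s=\tfrac12$ (indeed it extends verbatim to any $s\in(0,1)$) but must handle the nonlocal seminorm and its norm-equivalence on $\Gamma$ explicitly, with constants depending on shape regularity.
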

\begin{proof}
Given $\v\in\Hortho{}$, there exists $\w\in\Hb{\frac12}$ so that
$\tperp(\w)=\v$ according to Lemma \ref{lem:vw}.
For each $\ka \in\mesh$, we define
$\overline{\w}_\ka=\int_{\Delta_\ka} \w(\x) d\x\in\real^3$ and
$\overline\v_\ka =\tperp(\overline\w_\ka)\in\Hortho{}$.
Since the estimate of Lemma \ref{lem:locapp} is local, we have
\[
	\nLt{\h^{\alpha}(\v-\Ipsi\v)}{\Gamma}^2
	= \sum_{\ka\in\mesh}\nLt{\h^{\alpha}(\v-\Ipsi\v)}{\ka}^2
	\preceq \sum_{\ka\in\mesh}\nLt{h_\ka^{\alpha}(\v-\overline\v_\ka)}{\Delta_\ka}^2
\]
for $\alpha=-1,0$. Using Lemma \ref{lem:wL2} yields
\[
\nLt{\v-\overline\v_\ka}{\Delta_\ka}
=\nLt{\tperp(\w-\overline\w_\ka)}{\Delta_\ka}
\preceq \nLt{\w-\overline\w_\ka}{\Delta_\ka}
\]
and stability of the $L^2$-projection together with the
definition of $\overline\w_\ka$ implies
\begin{align*}
	\nLt{\w-\overline\w_\ka}{\Delta_\ka} &\preceq \nLt{\w}{\Delta_\ka}, \\
	\nLt{h_\ka^{-1}(\w-\overline\w_\ka)}{\Delta_\ka} &\preceq \|\w\|_{\bm H^1(\Delta_\ka)},
\end{align*}
whence
\begin{align*}	
	\nLt{\v-\Ipsi\v}{\Gamma}^2	
	\preceq \sum_{\ka\in\mesh}\nLt{\w}{\Delta_\ka}^2
	\preceq \nLt{\w}{\Gamma}^2, \\
	\nLt{\h^{-1}(\v-\Ipsi\v)}{\Gamma}^2
	\preceq \sum_{\ka\in\mesh}\|\w\|_{\bm H^1(\Delta_\ka)}^2
	\preceq \|\w\|_{\bm H^1(\Gamma)}^2.
\end{align*}
Applying Corollary \ref{cor:Interpolation1} to vector-valued
functions, we obtain
\[
	\nLt{\h^{-\frac12}(\v-\Ipsi\v)}{\Gamma}
	\preceq \|\w\|_{\bm H^{\frac12}(\Gamma)}
	\preceq \nHortho{}{\v},
\]
where the last inequality results from Lemma \ref{lem:vw}. This
concludes the proof.
\end{proof}

\section{Problem Setting}
\label{sec:ps}
%
The variational formulation of the Electric Field Integral Equation
(EFIE), also called Rumsey principle, consists of seeking $\u\in\Hdiv$
such that 
\begin{equation}
	\label{eq:EFIE}
	a(\u, \v) = \po{\f}{\v}\qquad\text{for all }\v\in\Hdiv
\end{equation} 
where $\f \in \Hcurl$, the sesquilinear form $a(\cdot,\cdot)$ is given by
\[
a(\u, \v) := \half{\Vk \div \u}{\div \v} - k^2 \para{\Ak \u}{\v},
\]
$\po{\cdot}{\cdot}$ is the duality pairing between
$\Hcurl$ and $\Hdiv$, $\half{\cdot}{\cdot}$ is the duality
  pairing $H^{\frac12}(\Gamma)-H^{-\frac12}(\Gamma)$, 
$\para{\cdot}{\cdot}$ is the duality pairing 
$\Hparas{\frac12}-\Hparas{-\frac12}$,
and the integral operators
$\Vk,\Ak$ has been defined in \S \ref{S:potentials}.

\par
The discrete formulation reads: find $\uh\in\RT$ such that 
\begin{equation}
	\label{eq:Pb}
	a(\uh, \vh) = \po{\f}{\vh}\qquad\text{for all }\vh\in\RT.
\end{equation} 

Equation \eqref{eq:EFIE} is well-posed
under the assumption that the wave number $k$ does not correspond to
an interior eigenmode of the Maxwell problem on $\Gamma$. 
As a consequence, the following continuous inf-sup
condition holds (see also \cite{Hiptmair:2002p479}):
\begin{equation}
	\label{eq:InfSup}
	\nHdiv{\u} \preceq
        \sup_{\v\in\Hdiv}\frac{a(\u,\v)}{\nHdiv{\v}}
        \quad\text{for all }\u\in\Hdiv.
\end{equation}
Since the boundary element discretization is conforming,
i.e. $\RT\subset \Hdiv$, the following {\it Galerkin orthogonality} holds:
if $\u\in\Hdiv$ is the solution of \eqref{eq:EFIE} and $\uh\in\RT$ is 
the solution of \eqref{eq:Pb}, then
\begin{equation}\label{galerkin-ortho}
	a(\u-\uh,\vh)=0 \quad\text{for all }\vh\in\RT.
\end{equation}

In addition, as a direct consequence of the Cauchy-Schwarz inequality 
and the continuity \eqref{eq:continuitySLBP} of the single layer boundary operators,
the form $a(\cdot,\cdot)$ is continuous:
\begin{equation}\label{a-continuous}
	a(\v,\w)\preceq \nHdiv{\v}\nHdiv{\w}
        \quad\text{for all }\v,\w\in\Hdiv{}.
\end{equation}

\section{A Posteriori  Error  Analysis}
\label{sec:apost}
As is customary in the theory of a posteriori error estimation, one
has to assume a higher regularity of the right-hand side than it is
needed for well-posedness in order to derive computable error
bounds. Therefore we assume in this section that
\begin{equation}\label{assump-f}
\f\in\Hpara{}\cap
\bm H^0_{{\rm curl}}(\Gamma)
\end{equation}
with $\Hpara{}$ given in Proposition \ref{P:H-1/2} and
$H^0_{{\rm curl}}(\Gamma)=\set{v\in L^2(\Gamma)}{\curlv v\in\Lt}$.\looseness=-1

We proceed as in Casc\'on, Nochetto, and Siebert \cite{CaNoSi:07} for
flat domains. To this end, we start with 
some auxiliary results that will be useful for our analysis later.

\begin{lemma}[Regular decomposition {\cite[Theorem 5.5]{Buffa:2002p1911}}]
	\label{lem:HodgeDecomp}
	The decomposition
	\[
		\Hdiv = \curlv (H^\frac12(\Gamma)/\mathbb C)+\Hortho{}
	\]
	is valid and is stable, i.e. for $\v= \Psiv + \curlv \alpha$ with $\Psiv\in\Hortho{}$ and $\alpha\in H^\frac12(\Gamma)\backslash \complex$,
	\begin{equation}
		\label{eq:StabHD}
		\nHortho{}{\Psiv} + \nH{\frac12}{\alpha} \preceq
                \nHdiv{\v}
                \quad\text{for all }\v\in \Hdiv.
	\end{equation}
\end{lemma}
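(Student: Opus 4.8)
The decomposition is quoted from \cite{Buffa:2002p1911}; here is the route one would take to obtain it. The plan is to lift $\v\in\Hdiv$ to a field on the solid $\Omega$, split that field by the classical regular (Helmholtz-type) decomposition of $\HcurlO$, and transport the splitting back to $\Gamma$ through the tangential trace $\gtrace$. First, using that $\gtrace:\HcurlO\to\Hdiv$ is surjective with a bounded right inverse, choose $\bm U\in\HcurlO$ with $\gtrace\bm U=\v$ and $\|\bm U\|_{\HcurlO}\preceq\nHdiv{\v}$. Then invoke the stable regular splitting $\bm U=\bm U_1+{\bf grad}\,\varphi$ with $\bm U_1\in\Hone$, $\varphi\in H^1(\Omega)$ and $\|\bm U_1\|_{\Hone}+\|\varphi\|_{H^1(\Omega)}\preceq\|\bm U\|_{\HcurlO}$, where ${\bf grad}$ is the gradient in $\real^3$; such splittings are classical for bounded Lipschitz domains, the essential point being that $\bm U_1$ need not be solenoidal so that no convexity of $\Omega$ is required (one construction lifts the right-hand side $\boldsymbol{\mathrm{curl}}\,\bm U$ to a solenoidal $\bm L^2$ field on a ball $B\supset\overline\Omega$, inverts a div--curl system on $B$ to get $\bm U_1\in\bm H^1(B)$ after restriction, and uses simple connectedness of $\Omega$ to write the curl-free remainder $\bm U-\bm U_1$ as ${\bf grad}\,\varphi$).

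Next, apply $\gtrace$ to $\bm U=\bm U_1+{\bf grad}\,\varphi$. On one hand, $\Psiv:=\gtrace\bm U_1\in\Hortho{}$ by Proposition \ref{prop:otrace}, with $\nHortho{}{\Psiv}\preceq\|\bm U_1\|_{\Hone}$. On the other hand, the definition \eqref{eq:gradcurl1} reads $\gtrace({\bf grad}\,\phi)=\curlv(\gamma\phi)$ for $\phi\in H^2(\Omega)$, and both sides depend continuously on $\phi\in H^1(\Omega)$: the right side because $\gamma:H^1(\Omega)\to\H{\frac12}$ and $\curlv:\H{\frac12}\to\Hpara{-}$ are continuous (Proposition \ref{P:grad-curl}), the left side because ${\bf grad}\,\phi\in\HcurlO$ and $\gtrace:\HcurlO\to\Hdiv$ is continuous. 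Hence the identity persists for $\varphi\in H^1(\Omega)$ by density of $H^2(\Omega)$ in $H^1(\Omega)$, so $\gtrace({\bf grad}\,\varphi)=\curlv\alpha$ with $\alpha:=\gamma\varphi\in\H{\frac12}$, $\nH{\frac12}{\alpha}\preceq\|\varphi\|_{H^1(\Omega)}$, and $\alpha$ determined only modulo constants (harmless, since $\curlv$ annihilates constants). Collecting, $\v=\Psiv+\curlv\alpha$ with $\Psiv\in\Hortho{}$ and $\alpha\in\H{\frac12}/\complex$; chaining the three norm estimates through the lifting step gives $\nHortho{}{\Psiv}+\nH{\frac12}{\alpha}\preceq\nHdiv{\v}$, which is \eqref{eq:StabHD}.

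I expect the main obstacle to be the solid regular decomposition in the second part of the first step: on a nonconvex Lipschitz domain the regular part $\bm U_1$ cannot in general be chosen both divergence-free and in $\bm H^1(\Omega)$, so one must use the non-solenoidal variant of the decomposition (of Costabel--McIntosh / Hiptmair type). Everything else is bookkeeping with the trace theorems recalled in \S\ref{sec:fsado}, the only additional ingredient being the surjectivity and bounded right inverse of $\gtrace:\HcurlO\to\Hdiv$ used in the lifting step, which belongs to the Buffa--Ciarlet--Hiptmair trace theory for $\HcurlO$.
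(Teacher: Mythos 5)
The paper itself offers no proof of this lemma: it is imported verbatim from \cite[Theorem 5.5]{Buffa:2002p1911}, so there is no internal argument to compare against, and the relevant comparison is with that reference, which establishes the splitting intrinsically on the surface via Hodge-type decompositions of the trace spaces. Your reconstruction takes the other classical route, through the volume: lift $\v$ to $\bm U\in\HcurlO$ using a bounded right inverse of $\gtrace$, apply a regular (Costabel--McIntosh/Birman--Solomyak type) decomposition $\bm U=\bm U_1+{\bf grad}\,\varphi$ with $\bm U_1\in\Hone$, and push the two pieces back to $\Gamma$ via Proposition \ref{prop:otrace} and the identity $\gtrace({\bf grad}\,\varphi)=\curlv(\gamma\varphi)$. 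The bookkeeping is done correctly; in particular the density argument for that identity is sound, since both sides are continuous in $\varphi\in H^1(\Omega)$ with values in $\Hpara{-}$, and chaining the three stability estimates does give \eqref{eq:StabHD} (in the quotient norm for $\alpha$, which is all that is claimed). Be aware, though, that your route rests on two facts not recalled in this paper: the surjectivity of $\gtrace:\HcurlO\to\Hdiv$ with continuous right inverse (the paper's Theorem 2.5 records only continuity; surjectivity is Buffa--Costabel--Sheen/Buffa--Ciarlet trace theory), and the stable regular decomposition of $\HcurlO$ on Lipschitz domains. What the detour buys is a derivation from familiar volume results; what it costs is precisely these imports, which are of the same depth as the surface result being proved.

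One point needs repair: your construction of the volume decomposition invokes simple connectedness of $\Omega$ to write the curl-free remainder as a gradient, but the lemma assumes only that $\Omega$ is a bounded polyhedron, which may have handles; in that case a curl-free field differs from a gradient by a nontrivial harmonic field, and such cohomology fields need not belong to $\Hone$ on a nonconvex polyhedron, so they cannot simply be absorbed into $\bm U_1$. The standard fixes are either to quote the regular decomposition $\HcurlO=\Hone+{\bf grad}\,H^1(\Omega)$ in its general form, valid for arbitrary bounded Lipschitz domains without topological hypotheses (e.g.\ by extending $\bm U$ itself to a compactly supported field in $\bm H({\rm curl},\real^3)$ and splitting there, where no topology intervenes), or to treat the finite-dimensional space of harmonic fields separately. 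With that modification your sketch goes through.
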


\begin{lemma}
	For $\VP$ given by \eqref{pw-linears} there holds
	$
		\curlv(\VP)\subset \RT.
	$
\end{lemma}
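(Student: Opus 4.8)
The claim is purely algebraic/geometric: the surface curl of a continuous piecewise linear scalar function lands in the lowest-order Raviart--Thomas space. I would prove this by checking the two defining properties of $\RT$ in turn — namely that $\curlv v$ is elementwise in the local space $\RT(\ka)$, and that it has continuous normal components across interior edges (so that it lies in $\Hdivp0$) — applied to $v\in\VP$.

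\emph{Step 1: elementwise membership.} Fix $v\in\VP$ and an element $\ka\in\mesh$. On $\ka$, $v$ is affine, so ${\bf grad}\,v$ is a constant vector in $\real^3$. By \eqref{eq:gradcurl1}, $\curlv v = \gtrace({\bf grad}\,v) = ({\bf grad}\,v\times\n)|_\Gamma$, which on $\ka$ is a constant tangential vector (here $\n$ is constant on $\ka$ since $\Gamma$ is polyhedral). A constant vector is trivially of the form $\bm\alpha+\beta\bm x$ with $\beta=0$, hence $(\curlv v)|_\ka\in\RT(\ka)$.

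\emph{Step 2: normal continuity across edges.} This is the main point. Let $e=e'_{ij}$ be an interior edge of $\mesh$ shared by $\ka_i,\ka_j$, with coplanar outer unit normal $\bm\nu$ (from $\ka_i$, say). I need $(\curlv v)|_{\ka_i}\cdot\bm\nu = (\curlv v)|_{\ka_j}\cdot\bm\nu$ along $e$. Writing $\bm t$ for a unit tangent to $e$, one has $\bm\nu = \bm t\times\n$ on the relevant face, so $(\gtrace({\bf grad}\,v))\cdot\bm\nu = ({\bf grad}\,v\times\n)\cdot(\bm t\times\n) = {\bf grad}\,v\cdot\bm t$ (using $|\n|=1$ and $\bm t\perp\n$, or equivalently the scalar identity $(\bm a\times\bm n)\cdot(\bm b\times\bm n)=(\bm a\cdot\bm b)-(\bm a\cdot\bm n)(\bm b\cdot\bm n)$ with $\bm b\cdot\bm n=0$). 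Thus the normal component of $\curlv v$ across $e$ equals the tangential derivative $\partial_{\bm t}v$ along $e$, which is the same computed from either side because $v$ is globally continuous (its restriction to the edge $e$ is single-valued, hence so is its tangential derivative along $e$). The same identity holds even when $e$ lies on an edge of the polyhedron $\Gamma$, since there \eqref{continuity} guarantees that the $\bm\tau_{ij}$-component of $\ptrace$ — equivalently the edge-tangential derivative — matches on both faces. Hence $\curlv v\in\Hdivp0$.

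\emph{Step 3: conclusion.} Combining Steps 1 and 2, $\curlv v$ lies in $\Hdivp0$ and is elementwise in $\RT(\ka)$, so by the definition of the global Raviart--Thomas space, $\curlv v\in\RT$. The only delicate point is Step 2, and specifically making the identification ``normal component of $\gtrace({\bf grad}\,v)$ = edge-tangential derivative of $v$'' rigorous; everything else is bookkeeping. An alternative, perhaps cleaner route would be to note that $\curlv v = \div_\Gamma(\n\times\cdot)$-type rotation of $\grad v$ and to use directly that $\grad(\VP)$ consists of piecewise-constant gradients whose edge-tangential components match by continuity of $v$, then transport this through the $\pi/2$ rotation $\gtrace$. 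I expect the write-up to be short.
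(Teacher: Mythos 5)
Your proof is correct, but it follows a genuinely different route from the paper. The paper's argument is two lines: it cites the characterization $\ker(\div)\cap \Lt = \curlv(\H{1})$ from \cite[Corollary 5.3]{Buffa:2002p1911}, so that for $\alpha\in\VP\subset \H{1}$ one gets $\div\,\curlv\alpha\equiv 0\in \lt$ immediately, hence $\curlv\alpha\in\Hdivp{0}$, and elementwise constancy then gives membership in $\RT$. You instead verify the definition directly: elementwise, $\curlv v=\gtrace({\bf grad}\,v)$ is a constant tangential field, hence in $\RT(\ka)$ with $\beta=0$; across mesh edges the co-normal component equals the edge-tangential derivative $\partial_{\bm t}v$ (via the Lagrange identity), which is single-valued because $v$ is continuous, so the distributional surface divergence has no edge contributions and $\curlv v\in\Hdivp{0}$. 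Your approach is self-contained and elementary, and it makes transparent \emph{why} the normal components match; its costs are the orientation bookkeeping at edges lying on the skeleton of $\Gamma$ (where the two in-plane co-normals $\bm\tau_i=\bm\tau_{ij}\times\n_i$ and $\bm\tau_j=\bm\tau_{ij}\times\n_j$ live in different planes, so the condition should be stated as the vanishing of the sum of outward co-normal components rather than equality against a single $\bm\nu$) and the implicit use of the standard fact that, for piecewise polynomial tangential fields, continuity of co-normal components is equivalent to $\div$ being in $L^2(\Gamma)$ — the paper's definition of $\RT$ is via $\Hdivp{0}$, not via normal continuity, so strictly this equivalence should be recorded (a one-line integration by parts on each $\ka$). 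Also, your appeal to \eqref{continuity} at skeleton edges is not quite the right reference, since that identity concerns traces of functions defined on $\overline\Omega$; the correct and sufficient reason, which you also state, is simply that $v\in\VP$ is continuous on $\Gamma$, so its restriction to each edge is single-valued. The paper's citation-based proof avoids all of this bookkeeping and yields $\div\,\curlv\alpha=0$ globally in one stroke, at the price of relying on external machinery.
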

\begin{proof}
By  \cite[Corollary 5.3]{Buffa:2002p1911} we have
\[
	\ker(\div)\cap \Lt = \curlv(\H1).
\]
Thus for all $\alpha\in \VP\subset\H1$ we infer that 
$\curlv \alpha\in \Lt$ is piecewise constant and that 
$\div\,\curlv\alpha\equiv 0\in \lt$. This implies that $\curlv
\alpha\in \Hdivp{0}$.
\end{proof}

\subsection{Upper Bound}
%
Let $\u\in\Hdiv$ be the exact solution of \eqref{eq:EFIE} and $\uh\in\RT$ be its approximation defined by \eqref{eq:Pb}. By the Galerkin orthogonality observe that 
\[
	a(\u-\uh,\v)=a(\u-\uh,\v-\vh) \quad\text{for all }
        \v\in\Hdiv, \vh\in\RT.
\]
Decompose $\v$ as $\v= \Psiv + \curlv \alpha$, according to Lemma
\ref{lem:HodgeDecomp}, and define 
\[
	\dPsiv:=\Psiv-\Psivm,\qquad
	\dalpha:=\alpha-\alpham
\]
where $\Psivm\in\RT$ and $\alpham \in \VP$ can be arbitrarily chosen.
Thus we can write $\v-\vh= \dPsiv + \curlv \dalpha$ and
\begin{align*}
	a(\u-\uh &,\v-\vh)
	=\po{\f}{\dPsiv + \curlv \dalpha}-a(\uh,\dPsiv + \curlv \dalpha) \\
	&= \underbrace{\po{\f}{\dPsiv} + \para{k^2\Ak \uh}{\dPsiv}}_{=\calI_1} 
	\\ & +  \underbrace{\po{\f}{ \curlv \dalpha} 
        +  \para{k^2\Ak \uh}{ \curlv\dalpha} }_{=\calI_2} 
        - \underbrace{\half{\Vk \div \uh}{\div \dPsiv}}_{=\calI_3}
\end{align*}
for any $\v\in\Hdiv$, $\Psivm\in\RT$ and $\alpham\in \VP$.
We proceed in four steps.

\noindent
\\
\step{1} We note that $\f\in \Lt$, $k^2\Ak \uh\in
\Hpara{}\subset\Lt$ and that $\Psiv\in\Hortho{}\subset\Lt$ due to
enhanced regularity of $\Psiv$ asserted in 
Lemma \ref{lem:HodgeDecomp}. Since $\Psivm\in\RT\subset\Lt$, we can
replace the duality pairing in $\calI_1$ by an integral and thus
write
\begin{equation}
	\label{eq:I1}
	\calI_1 = \int_\Gamma (\f+k^2\Ak \uh)\cdot \dPsiv\ds.
\end{equation}

\step{2}  Since $\f\in\Hpara{}$ the duality pairing
$\po{\cdot}{\cdot}$ can be interpreted as
\[
	\po{\f}{ \curlv \dalpha} = 
	\para{\f}{ \curlv \dalpha},
\]
namely as a duality pairing in $\Hpara{}$.
The definition \eqref{div-curl} of $\curls$ now yields
\[
	\calI_2 = \para{\f+k^2\Ak \uh}{ \curlv \dalpha}
	= \half{\curls(\f+k^2\Ak \uh)}{ \dalpha}
\]
Since $\dalpha\in H^\frac12(\Gamma)$ and 
$\curls(\f+k^2\Ak \uh)\in L^2(\Gamma)$ because of \eqref{eq:RL2Ak}
and \eqref{assump-f}, we can also write $\calI_2$ as an integral
\begin{equation}
	\label{eq:I2}
	\calI_2 = \int_\Gamma \curls(\f+k^2\Ak \uh) \, \dalpha \ds.
\end{equation}

\step{3}  For the last term $\calI_3$, we integrate by parts according
to \eqref{div-curl}, whence
\[
	\calI_3 = -\ortho{\grad(\Vk \div \uh)}{\dPsiv}.
\]
Since $\div(\RT)\subset L^2(\Gamma)$ we infer that
$\grad(\Vk \div \uh)\in \Lt$ in light of \eqref{eq:RL2Vk}.
This implies that $\calI_3$ is also an integral
\begin{equation}
	\label{eq:I3}
	\calI_3 = -\int_\Gamma \grad(\Vk \div \uh) \cdot \dPsiv\ds.
\end{equation}

\step{4}
Inserting \eqref{eq:I1}-\eqref{eq:I3} back into the sesquilinear form
$a$ yields
\begin{equation}\label{eq:reseq}
	a(\u-\uh,\v) = \int_\Gamma \Res\cdot \dPsiv \ds
	+  \int_\Gamma \cres  \,\dalpha \ds
        \quad\text{for all }\v\in\Hdiv,
\end{equation}
where $\Res\in\Lt$ and $\cres\in L^2(\Gamma)$ are given element-by-element by
\begin{equation}\label{residuals}
\begin{aligned}
	\Res|_\ka &:= \f+k^2\Ak \uh + \grad(\Vk \div \uh) & \text{for all }\ka\in\mesh,\\
	\cres|_\ka &:= \curls(\f+k^2\Ak \uh)& \text{for all }\ka\in\mesh.
\end{aligned}
\end{equation}
We now choose $\alpham=\Ialpha\alpha$ and $\Psivm=\Ipsi \Psiv$
where $\Ialpha$ and $\Ipsi$ are the interpolation operators of
Definitions \ref{def:interpolationVh} and \ref{def:interpolationRT}.
Applying the Cauchy-Schwarz inequality and the 
interpolation estimates \eqref{eq:Interp2} and \eqref{eq:Interp1}
yields
\begin{align*}
	a(\u-\uh,\v) &\le  \|\h^\frac12 \Res\|_{L^2(\Gamma)} \|\h^{-\frac12} \dPsiv\|_{L^2(\Gamma)} + \|\h^\frac12 \cres\|_{L^2(\Gamma)} \|\h^{-\frac12} \dalpha\|_{L^2(\Gamma)}\\
	& \preceq \|\h^\frac12 \Res\|_{L^2(\Gamma)}  \nHortho{}{ \Psiv}+
        \|\h^\frac12 \cres\|_{L^2(\Gamma)} \nH{\frac12}{ \alpha},
\end{align*}
which together with the stability \eqref{eq:StabHD} of the regular
decomposition leads to
\begin{equation*}
a(\u-\uh,\v) \preceq \left(\|\h^\frac12 \Res\|_{L^2(\Gamma)} 
+ \|\h^\frac12 \cres\|_{L^2(\Gamma)}\right) \,\nHdiv{\v}.
\end{equation*}
Combining this with the inf-sup condition \eqref{eq:InfSup} finally implies
\begin{align*}
	\nHdiv{\u-\uh} 
	&\preceq \sup_{\v\in\Hdiv}\frac{a(\u-\uh,\v)}{\nHdiv{\v}} 
	\preceq \|\h^\frac12 \Res\|_{\Gamma} + \|\h^\frac12 \cres\|_{\Gamma}.
\end{align*}
We summarize this derivation in the following theorem.
\begin{theorem}[Upper bound] Let $\f\in\Hpara{}\cap \bm H^0_{{\rm
      curl}}(\Gamma)$, $\u\in\Hdiv$ be the exact solution of
  \eqref{eq:EFIE} and $\uh\in\RT$ be its approximation defined by
  \eqref{eq:Pb}. Then, there exists a constant $C_1>0$ depending on 
shape regularity of $\mesh$ such that the following bound holds
\[
	\nHdiv{\u-\uh}^2 \le C_1 \sum_{\ka\in\mesh} \eta_\mesh^2(\ka)
\]
where the element indicators $\eta_\mesh(\ka)$ are defined as follows in terms of the
residuals $\Res\in\Lt$ and $\cres\in L^2(\Gamma)$ given in \eqref{residuals}
\[
	\eta_\mesh^2(\ka) := h_T\nLt{\Res}{\ka}^2 + h_T\nlt{\cres}{\ka}^2.
\]
\end{theorem}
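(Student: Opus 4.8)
The plan is to follow the four-step derivation carried out in the text preceding the statement, organizing it into a self-contained argument. First I would invoke Galerkin orthogonality \eqref{galerkin-ortho} to replace $a(\u-\uh,\v)$ by $a(\u-\uh,\v-\vh)$ for an arbitrary $\vh\in\RT$. Then I would apply the regular (Hodge-type) decomposition of Lemma \ref{lem:HodgeDecomp} to write $\v=\Psiv+\curlv\alpha$ with $\Psiv\in\Hortho{}$ and $\alpha\in H^{\frac12}(\Gamma)/\complex$, and choose the discrete test function as $\vh=\Psivm+\curlv\alpham$ with $\Psivm=\Ipsi\Psiv\in\RT$ and $\alpham=\Ialpha\alpha\in\VP$; here one uses the preceding lemma that $\curlv(\VP)\subset\RT$, so that $\vh$ is indeed admissible. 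Writing $\dPsiv=\Psiv-\Psivm$ and $\dalpha=\alpha-\alpham$, one gets $\v-\vh=\dPsiv+\curlv\dalpha$.

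Next I would expand $a(\u-\uh,\v-\vh)$ using the EFIE \eqref{eq:EFIE} for the continuous part and the definition of $a$ for the $\uh$-part, splitting it into the three contributions $\calI_1,\calI_2,\calI_3$ as in the text. The core of the argument is then the observation that, thanks to the extra regularity assumption $\f\in\Hpara{}\cap\bm H^0_{\rm curl}(\Gamma)$ together with the $L^2$-smoothing properties \eqref{eq:RL2Vk} and \eqref{eq:RL2Ak} of the single layer operators (and the enhanced regularity $\Psiv\in\Hortho{}\subset\Lt$ from Lemma \ref{lem:HodgeDecomp}), every duality pairing in $\calI_1,\calI_2,\calI_3$ can be rewritten as an honest $L^2(\Gamma)$-integral. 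For $\calI_2$ and $\calI_3$ this requires integrating by parts via \eqref{div-curl} to move $\curlv$ onto the residual and $\grad$ off of $\Vk\div\uh$, respectively. Collecting terms one arrives at the residual identity \eqref{eq:reseq} with the volume residual $\Res=\f+k^2\Ak\uh+\grad(\Vk\div\uh)$ and the scalar residual $\cres=\curls(\f+k^2\Ak\uh)$.

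To finish, I would apply the Cauchy--Schwarz inequality elementwise to \eqref{eq:reseq}, insert the weight $\h^{\pm\frac12}$, and use the interpolation estimates \eqref{eq:Interp2} for $\dalpha=\alpha-\Ialpha\alpha$ and \eqref{eq:Interp1} for $\dPsiv=\Psiv-\Ipsi\Psiv$ to bound $\|\h^{-\frac12}\dPsiv\|_{L^2(\Gamma)}\preceq\nHortho{}{\Psiv}$ and $\|\h^{-\frac12}\dalpha\|_{L^2(\Gamma)}\preceq\nH{\frac12}{\alpha}$. Then the stability \eqref{eq:StabHD} of the regular decomposition replaces $\nHortho{}{\Psiv}+\nH{\frac12}{\alpha}$ by $\nHdiv{\v}$, and the continuous inf-sup condition \eqref{eq:InfSup} (valid since $k$ is not an interior Maxwell eigenvalue) converts the resulting estimate on $\sup_{\v}a(\u-\uh,\v)/\nHdiv{\v}$ into the bound on $\nHdiv{\u-\uh}$. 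Squaring and localizing the $L^2$-norms of $\Res$ and $\cres$ to elements gives $\nHdiv{\u-\uh}^2\le C_1\sum_{\ka\in\mesh}\eta_\mesh^2(\ka)$ with $\eta_\mesh^2(\ka)=h_T\nLt{\Res}{\ka}^2+h_T\nlt{\cres}{\ka}^2$. The step I expect to be the main obstacle is justifying rigorously that each of the three duality pairings genuinely reduces to an $L^2$-integral; this is precisely where the higher regularity of $\f$, the mapping properties \eqref{eq:RL2Vk}--\eqref{eq:RL2Ak}, and the $\Hortho{}$-regularity of $\Psiv$ must all be used in concert, and care is needed that the integration-by-parts formulae \eqref{div-curl} are applied to arguments in the correct spaces.
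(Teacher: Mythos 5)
Your proposal is correct and follows essentially the same route as the paper's own derivation: Galerkin orthogonality, the regular decomposition of Lemma \ref{lem:HodgeDecomp} with the Cl\'ement-type interpolants $\Ipsi\Psiv$ and $\Ialpha\alpha$, the rewriting of $\calI_1,\calI_2,\calI_3$ as $L^2(\Gamma)$-integrals via \eqref{assump-f}, \eqref{eq:RL2Vk}, \eqref{eq:RL2Ak} and \eqref{div-curl}, and then Cauchy--Schwarz with the weights $\h^{\pm 1/2}$, the interpolation estimates \eqref{eq:Interp2}--\eqref{eq:Interp1}, the stability \eqref{eq:StabHD}, and the inf-sup condition \eqref{eq:InfSup}. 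You have also correctly identified the delicate point, namely the justification that each duality pairing reduces to an honest integral, which is exactly where the paper invests its Steps 1--3.
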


\begin{remark}[Trace regularity of an incident plane wave]
\rm 
It the right hand side $\f$ is the tangential trace of a plane
wave $\bm E_{inc}$, then we conclude from the analyticity of the plane
wave and of all its derivatives that
\[
	\f=\ptrace(\bm E_{inc}) \in \Hpara{},
\quad
\ptrace(\partial_{x_i} \bm E_{inc}) \in \Hpara{}
\]
for $i=1,2,3$. Therefore, $\f$ satisfies the stated regularity
assumption  \eqref{assump-f}.
\end{remark}

\subsection{Lower Bound}
%
We next show a {\it global} lower bounds for the error indicators $\eta_\mesh^2(\ka)$.
Since $\Res\in\Lt$ and $\cres\in L^2(\Gamma)$ we define the local constants
\[
	\LRes=\int_\ka\Res(\x)\ds(\x)
	\qquad
	\Lcres=\int_\ka\cres(\x)\ds(\x),
        \qquad\text{for all }T\in\mesh,
\]
and their global piecewise constant counterparts 
$\LResG|_\ka = \LRes$ and $\LcresG|_\ka = \Lcres$.

\begin{theorem}[Global lower bound for the residual] Let $\u\in\Hdiv$
  be the exact solution of \eqref{eq:EFIE} and $\uh\in\RT$ be its
  approximation defined by \eqref{eq:Pb}. Then, there exists a
  constant $C_2>0$, only depending on shape regularity of $\mesh$,
  such that the following bound holds
\[
C_2 \nLt{\h^\frac12 \Res}{\Gamma} \le \nHdiv{\u-\uh} + \nLt{\h^\frac12(\Res-\Res_0)}{\Gamma}.
\]
\end{theorem}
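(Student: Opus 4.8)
The plan is to establish the lower bound by the standard residual-estimator technique of testing the error equation against a suitable perturbation of the residual, localized by bubble functions, but working around the fact that $\Res$ lives only in $\Lt$ (a fractional trace space) rather than in a piecewise polynomial space. The key identity is \eqref{eq:reseq}, which for the choice $\dalpha=0$ (i.e.\ $\alpha=\alpham$, so the $\cres$-term drops) reads
\[
a(\u-\uh,\v)=\int_\Gamma \Res\cdot\dPsiv\ds,\qquad \dPsiv=\Psiv-\Psivm,
\]
whenever $\v=\Psiv\in\Hortho{}$ and $\Psivm\in\RT$ is arbitrary. Since $\Hortho{}\subset\Hdiv$ with $\curlv\alpha=0$, any $\Psiv\in\Hortho{}$ is an admissible test function and $\nHdiv{\Psiv}\preceq\nHortho{}{\Psiv}$.

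First I would replace $\Res$ by its elementwise average $\Res_0$, writing $\Res=\Res_0+(\Res-\Res_0)$; the second piece is absorbed into the oscillation term on the right-hand side at the end. The main work is then to bound $\nLt{\h^{1/2}\Res_0}{\Gamma}$. Because $\Res_0$ is piecewise constant, it has the form $\Res_0|_\ka=\gtrace\bm c_\ka$ for suitable $\bm c_\ka\in\real^3$ only after a rotation; more directly, I would use an RWG/Raviart--Thomas–type bubble construction: for each element $\ka$ build a tangential vector field $\bm b_\ka$ supported on $\ka$ (or on the edge patch) with $\int_\ka \Res_0\cdot\bm b_\ka\ds \gtrsim h_\ka\nLt{\Res_0}{\ka}^2$ and $\nLt{\h^{-1/2}\bm b_\ka}{\Gamma}\preceq \nLt{\h^{1/2}\Res_0}{\Gamma}$, with $\bm b_\ka\in\Hortho{}$ (continuity of the relevant edge components across mesh edges is what must be checked — this is where $\Hortho{}$, not merely $\Lt$, enters). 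Summing a partition-of-unity-weighted version of these over all elements produces a single global test function $\Psiv\in\Hortho{}$ with
\[
\int_\Gamma \Res_0\cdot\Psiv\ds \gtrsim \nLt{\h^{1/2}\Res_0}{\Gamma}^2,
\qquad
\nHortho{}{\Psiv}\preceq \nLt{\h^{1/2}\Res_0}{\Gamma}.
\]
Then I would take $\Psivm=\Ipsi\Psiv\in\RT$, use the interpolation estimate \eqref{eq:Interp1} so that $\nLt{\h^{-1/2}\dPsiv}{\Gamma}\preceq\nHortho{}{\Psiv}\preceq\nLt{\h^{1/2}\Res_0}{\Gamma}$, and combine \eqref{eq:reseq} (with $\dalpha=0$) with continuity \eqref{a-continuous} of $a(\cdot,\cdot)$:
\[
\nLt{\h^{1/2}\Res_0}{\Gamma}^2 \preceq \int_\Gamma\Res_0\cdot\Psiv\ds
= a(\u-\uh,\Psiv) - \int_\Gamma(\Res-\Res_0)\cdot\dPsiv\ds
\preceq \big(\nHdiv{\u-\uh}+\nLt{\h^{1/2}(\Res-\Res_0)}{\Gamma}\big)\,\nLt{\h^{1/2}\Res_0}{\Gamma}.
\]
Dividing through and then using $\nLt{\h^{1/2}\Res}{\Gamma}\le\nLt{\h^{1/2}\Res_0}{\Gamma}+\nLt{\h^{1/2}(\Res-\Res_0)}{\Gamma}$ gives the claim with $C_2$ depending only on shape regularity.

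The main obstacle is the construction of the global test function $\Psiv$ in $\Hortho{}$: unlike in the flat $H(\mathrm{div})$ setting of \cite{CaNoSi:07}, one cannot simply take $\Psiv=\h\,\Res_0$ times a scalar bubble, because (i) $\Res_0$ is a tangential field defined face-by-face whose orthogonal-to-edge component need not match across edges of the polyhedron, so membership in $\Hortho{}$ is not automatic, and (ii) one must simultaneously control the full $\nHortho{}{\cdot}$ norm — including the nonlocal edge seminorms $\mathcal N^\perp_{ij}$ — rather than just an $L^2$ or $H^1$ norm. The cleanest route is to lift to $\Hb{1/2}$: choose a piecewise-constant (after the $\gtrace$-rotation) field so that Lemma \ref{lem:locapp}/\ref{lem:wL2} machinery applies, or equivalently build $\Psiv=\tperp(\w)$ with $\w$ a suitable vector bubble in $\Hb{1/2}$ and invoke $\nHortho{}{\Psiv}\preceq\nHb{1/2}{\w}$ together with the $L^2$ localization bounds already proved; the scaling $\nHb{1/2}{\w}\preceq\nLt{\h^{1/2}\Res_0}{\Gamma}$ then follows from an elementwise interpolation argument as in Corollary \ref{cor:Interpolation1}. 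Everything else — absorbing $\Res-\Res_0$, the final algebra — is routine.
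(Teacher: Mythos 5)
Your overall strategy is the paper's: scale a local bubble by the elementwise average $\Res_0$, use the residual identity \eqref{eq:reseq} with the $\alpha$-part set to zero, invoke continuity \eqref{a-continuous} of $a(\cdot,\cdot)$, and absorb $\Res-\Res_0$ into the oscillation term. The gap is precisely at the step you yourself flag as ``the main obstacle'': you need a global test function $\Psiv$ with $\int_\Gamma\Res_0\cdot\Psiv\ds\gtrsim\nLt{\h^{1/2}\Res_0}{\Gamma}^2$ \emph{and} $\nHortho{}{\Psiv}\preceq\nLt{\h^{1/2}\Res_0}{\Gamma}$, but this second bound is asserted, not proved, and the route you sketch does not deliver it. The map $\tperp=\gtrace\circ\Rtrace$ is built from a global extension into $\Omega$ followed by a trace, so it carries no elementwise locality or $h$-scaling; fractional norms such as $\bm H^{1/2}(\Gamma_j)$ are not additive over the disjoint bubble supports, so an ``elementwise'' argument does not bound the face norms of the sum; the nonlocal edge seminorms $\mathcal N^\perp_{ij}$ entering $\nHortho{}{\cdot}$ are never estimated; and Corollary \ref{cor:Interpolation1} interpolates a fixed linear operator between weighted $L^2$ spaces, so it does not by itself give an $\bm H^{1/2}$-type bound for this particular bubble sum. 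As it stands, the central estimate of your argument is missing.

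Moreover, this strong $\Hortho{}$-control is unnecessary, and avoiding it is exactly the point of the paper's proof: since only $\nHdiv{\Psiv}$ enters through \eqref{a-continuous}, the paper bounds it directly by $\nLt{\h^{1/2}\Res_0}{\Gamma}$ using (i) $\nHpara{-}{\Psiv}\le\nLt{\Psiv}{\Gamma}\preceq\nLt{\h\Res_0}{\Gamma}$ and (ii) a duality argument for $\nH{-\frac12}{\div\Psiv}$ that exploits the zero-mean property \eqref{eq:divzeroG} of the bubble's divergence together with a Bramble--Hilbert bound for $\varphi-\varphi_0$ and an inverse estimate. Your proposal never mentions this vanishing-mean property nor any independent control of $\div\Psiv$ in $H^{-1/2}(\Gamma)$ except through the unproven $\Hortho{}$ bound. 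Two smaller points: with your choice $\Psivm=\Ipsi\Psiv$ the displayed identity $\int_\Gamma\Res_0\cdot\Psiv\ds=a(\u-\uh,\Psiv)-\int_\Gamma(\Res-\Res_0)\cdot\dPsiv\ds$ is false, since it omits $\int_\Gamma\Res_0\cdot\Ipsi\Psiv\ds\neq0$ (the bubbles do not have vanishing edge degrees of freedom); the fix is simply to take $\Psivm=0$, which is admissible because $\Psivm$ is arbitrary in \eqref{eq:reseq}, and then the interpolation estimate \eqref{eq:Interp1} plays no role in the lower bound at all.
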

\begin{proof}
Let $b_\ka:\Omega\rightarrow \real$ be a bubble function, namely a
Lipschitz function so that
\[
\text{supp } b_\ka\subset\ka, 
\qquad
\int_\ka b_\ka\,d\x=|\ka|\approx\int_\ka b_\ka^2\,d\x,
\]
for a given $T\in\mesh$.
Such a function can be given by a polynomial of degree three on $\ka$ 
consisting of the product of all three barycentric coordinates times 
a real scaling factor.
Let $\Psiv_\ka=\bm\sigma_\ka b_\ka$ with $\bm\sigma_\ka\in\complex^2$
and note that as a direct consequence of the first point we have
	\begin{equation}
		\label{eq:divzeroG}
		\int_\ka \div\Psiv_\ka\ds = \int_{\partial\ka} \Psiv_\ka\cdot \bm n_\ka\de = 0.
	\end{equation}
	For the particular choice $\bm\sigma_\ka=h_\ka\LRes$, we see that 
	\[
		\int_\ka \LRes\cdot\Psiv_\ka\,d\x = h_T\nLt{\LRes}{\ka}^2
	\]	
	and 
	\[
		\nLt{\Psiv_\ka}{\ka}
		\preceq h_\ka\nLt{\LRes}{\ka}
		\preceq \nLt{\Psiv_\ka}{\ka}.
	\]
	We construct a global function $\Psiv$ so that its
          restriction to $T$ coincides with $\Psiv_T$ for all
          $T\in\mesh$. We claim that $\Psiv\in\Hortho{}$ because it is
          made of piecewise polynomials with vanishing 
          normal component on the interelement boundaries of
          $\mesh$. In view of Lemma \ref{lem:HodgeDecomp}, such a
          $\Psiv$ is an admisible test function in \eqref{eq:reseq}
          and, together with the choices $\Psivm=0$ and
          $\alpha=\alpham=0$, yields
	\begin{align*}
		a(\u-\uh,\Psiv)
		&= \int_\Gamma \Res\cdot\Psiv\ds
		= \int_\Gamma (\Res-\LResG)\cdot\Psiv\ds + \int_\Gamma \LResG\cdot\Psiv\ds \\
		& = \int_\Gamma (\Res-\LResG)\cdot\Psiv\ds + \nLt{\h^\frac12\LResG}{\Gamma}^2.
	\end{align*}
	By the continuity of the sesquilinear form $a(\cdot,\cdot)$, we have
\begin{equation}\label{eq:help1G}	
\begin{aligned}
\nLt{\h^\frac12&\LResG}{\Gamma}^2
= a(\u-\uh,\Psiv) - \int_\Gamma (\Res-\LResG)\cdot\Psiv\,d\x\\
&\preceq \nHdiv{\u-\uh}\nHdiv{\Psiv} + \nLt{\h^\frac12(\Res-\LResG)}{\Gamma}\nLt{\h^\frac12\LResG}{\Gamma}.
\end{aligned}
\end{equation}
It remains to estimate $\nHdiv{\Psiv}$.
For $\varphi\in\H{\frac12}$, let $\varphi_0$ denote the elementwise 
average of $\varphi$.
The Bramble-Hilbert Lemma yields
\[
\nLt{\h^{-\frac12}(\varphi-\varphi_0)}{\Gamma} \preceq
|\varphi|_{H^\frac12(\Gamma)},
\]
which in conjunction with \eqref{eq:divzeroG} implies
	\begin{align*}
		\half{\div \Psiv_\ka}{\varphi}
		&= \int_\Gamma \div \Psiv (\varphi-\varphi_0)\,d\x
		\preceq \nlt{\h^\frac12\div \Psiv}{\Gamma}|\varphi|_{H^\frac12(\Gamma)}\\
		&\preceq \nLt{\h^{-\frac12}\Psiv}{\Gamma}|\varphi|_{H^\frac12(\Gamma)}
		\preceq \nLt{\h^{\frac12}\LResG}{\Gamma}|\varphi|_{H^\frac12(\Gamma)}
	\end{align*}
because of the norm equivalence for the discrete function $\Psiv$. Now, by definition
	\[
		\nH{-\frac12}{\div\Psiv} = \sup_{\varphi\in\H{\frac12}}\frac{\half{\div \Psiv}{\varphi}}{|\varphi|_{H^\frac12(\Gamma)}}
		\preceq \nLt{\h^{\frac12}\LResG}{\Gamma},
	\]
	and
	\[
		\nHpara{-}{\Psiv}
		\le \nLt{\Psiv}{\Gamma}
		\preceq \nLt{\h\LResG}{\Gamma}.
	\]
Consequently
	\[
		\nHdiv{\Psiv} 
		\preceq \nLt{\h^{\frac12}\LResG}{\Gamma}
	\]
	which together with \eqref{eq:help1G} implies that 
	\[
	\nLt{\h^\frac12\Res_0}{\Gamma}
	\preceq \nHdiv{\u-\uh} +
        \nLt{\h^\frac12(\Res-\Res_0)}{\Gamma}.
	\]
	Invoking the triangle inequality finally finishes the proof.
\end{proof}

It is important to realize the global nature of the above lower
  bound. This is due to the presence of integral operators $\Vk,\Ak$ in the 
sesquilinear form $a(\cdot,\cdot)$ which lead to a global norm for the
error in \eqref{eq:help1G} regardless of the support of $\Psiv$.

In a very similar fashion, the following theorem can also be proven.

\begin{theorem}[Global lower bound for the curl residual] 
Let $\u\in\Hdiv$ be the exact solution of \eqref{eq:EFIE} 
and $\uh\in\RT$ be its approximation defined by \eqref{eq:Pb}. Then, 
there exists a constant $C_3>0$, only depending on shape regularity of
$\mesh$, such that the following bound holds
\[
C_3\nlt{\h^\frac12 \,\cres}{\Gamma} \le \nHdiv{\u-\uh} + \nlt{\h^\frac12(\cres-\LcresG)}{\Gamma}.
\]
\end{theorem}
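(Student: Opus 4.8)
The plan is to follow the proof of the previous theorem almost verbatim, replacing the interior bubble vector field by the surface curl of a scalar bubble, so that the residual identity \eqref{eq:reseq} isolates the curl residual $\cres$. I would keep the interior bubble $b_\ka$ of the previous proof (the cubic given by the product of the three barycentric coordinates of $\ka$, scaled so that $\int_\ka b_\ka\,\ds=|\ka|\approx\int_\ka b_\ka^2\,\ds$), set $\alpha_\ka:=h_\ka\Lcres\,b_\ka$ on each $\ka\in\mesh$, and let $\alpha$ be the global function with $\alpha|_\ka=\alpha_\ka$. Since each $b_\ka$ vanishes on $\partial\ka$, $\alpha$ is continuous and piecewise polynomial, hence $\alpha\in\H1\subset\H{\frac12}$; moreover the tangential derivative of $\alpha$ along every mesh edge vanishes, so the normal component of $\curlv\alpha$ vanishes across all interelement boundaries. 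Consequently $\curlv\alpha\in\Hdivp0\subset\Hdiv$ with $\div\,\curlv\alpha\equiv0$, and by Lemma \ref{lem:HodgeDecomp} it is an admissible test function in \eqref{eq:reseq} corresponding to the decomposition $\v=\Psiv+\curlv\alpha$ with $\Psiv=0$. Choosing in addition $\Psivm=0$ and $\alpham=0$ turns \eqref{eq:reseq} into
\[
a(\u-\uh,\curlv\alpha)=\int_\Gamma\cres\,\alpha\,\ds=\int_\Gamma(\cres-\LcresG)\,\alpha\,\ds+\nlt{\h^{\frac12}\LcresG}{\Gamma}^2,
\]
where the last equality uses $\int_\ka\LcresG\,\alpha_\ka\,\ds=h_\ka\nlt{\Lcres}{\ka}^2$ together with the norm equivalence $\nlt{\alpha_\ka}{\ka}\preceq h_\ka\nlt{\Lcres}{\ka}\preceq\nlt{\alpha_\ka}{\ka}$, exactly as in the vector case.

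Next I would invoke the continuity \eqref{a-continuous} of $a(\cdot,\cdot)$, the Cauchy--Schwarz inequality, and the bound $\nlt{\h^{-\frac12}\alpha}{\Gamma}\preceq\nlt{\h^{\frac12}\LcresG}{\Gamma}$ (a consequence of the preceding norm equivalence) to obtain
\[
\nlt{\h^{\frac12}\LcresG}{\Gamma}^2\preceq\nHdiv{\u-\uh}\,\nHdiv{\curlv\alpha}+\nlt{\h^{\frac12}(\cres-\LcresG)}{\Gamma}\,\nlt{\h^{\frac12}\LcresG}{\Gamma},
\]
so it only remains to show $\nHdiv{\curlv\alpha}\preceq\nlt{\h^{\frac12}\LcresG}{\Gamma}$. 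Since $\div\,\curlv\alpha=0$, we have $\nHdiv{\curlv\alpha}=\nHpara{-}{\curlv\alpha}$; the continuity of $\curlv:\H{\frac12}\to\Hpara{-}$ in Proposition \ref{P:grad-curl}, together with $\curlv(\text{const})=0$ and a Poincar\'e inequality on the connected surface $\Gamma$, gives $\nHpara{-}{\curlv\alpha}\preceq|\alpha|_{H^{\frac12}(\Gamma)}$. Finally, the standard fractional inverse inequality $|v_h|_{H^{\frac12}(\Gamma)}\preceq\nlt{\h^{-\frac12}v_h}{\Gamma}$ for continuous piecewise polynomials on a shape-regular mesh, combined with $\nlt{\h^{-\frac12}\alpha}{\Gamma}\approx\nlt{\h^{\frac12}\LcresG}{\Gamma}$, closes the estimate. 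Putting the two displays together, dividing by $\nlt{\h^{\frac12}\LcresG}{\Gamma}$, and using the triangle inequality $\nlt{\h^{\frac12}\cres}{\Gamma}\le\nlt{\h^{\frac12}\LcresG}{\Gamma}+\nlt{\h^{\frac12}(\cres-\LcresG)}{\Gamma}$ finishes the proof.

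The only substantive new ingredient is the bound $\nHdiv{\curlv\alpha}\preceq\nlt{\h^{\frac12}\LcresG}{\Gamma}$, and inside it the $H^{1/2}$ inverse inequality: unlike the integer-order inverse estimate used (through Bramble--Hilbert) for $\nH{-\frac12}{\div\Psiv}$ in the previous theorem, the $H^{1/2}$-norm does not localize elementwise, so one cannot simply sum local contributions, and the global interpolation bound $|\alpha|_{H^{\frac12}(\Gamma)}\preceq\nlt{\alpha}{\Gamma}^{1/2}\,\nH{1}{\alpha}^{1/2}$ is too crude on strongly graded meshes. I therefore expect this fractional-order step to be the one requiring care. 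An equivalent route avoiding the black-box inverse inequality is to estimate $\para{\bm\phi}{\curlv\alpha}$ directly by elementwise integration by parts (the boundary terms vanish because $\alpha=0$ on all element edges), bounding each local pairing $\langle{\rm curl}_\ka\bm\phi,\alpha_\ka\rangle$ by $\|\bm\phi\|_{\bm H^{1/2}(\ka)}$ times the $H^{1/2}_{00}(\ka)$-norm of the bubble, which scales like $h_\ka^{3/2}|\Lcres|$, and then summing with Cauchy--Schwarz; either way the fractional-order bookkeeping is the crux, everything else being a line-by-line adaptation of the previous proof.
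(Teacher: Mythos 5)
Your proposal follows the intended skeleton faithfully (the paper only states that this theorem is proven ``in a very similar fashion'' to the previous one): scalar bubble $\alpha|_\ka=h_\ka\Lcres b_\ka$, the choices $\Psiv=\Psivm=0$, $\alpham=0$ in \eqref{eq:reseq}, continuity of $a$, the scaling $\nlt{\h^{-1/2}\alpha}{\Gamma}\preceq\nlt{\h^{1/2}\LcresG}{\Gamma}$, and the final triangle inequality are all line-by-line adaptations, and you correctly saw that the crude bound $\nHpara{-}{\curlv\alpha}\le\nLt{\curlv\alpha}{\Gamma}$ would lose half a power of $\h$. Where you genuinely diverge is the one new estimate $\nHdiv{\curlv\alpha}\preceq\nlt{\h^{1/2}\LcresG}{\Gamma}$: the natural mirror of the paper's argument needs no fractional inverse inequality at all. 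Since $\alpha|_\ka$ vanishes on $\partial\ka$, one has $\int_\ka\curlv\alpha\ds=0$ (it is a rotation of $\int_{\partial\ka}\alpha\,\bm\nu\de$), the exact analog of \eqref{eq:divzeroG}; hence in the duality characterization of $\nHpara{-}{\curlv\alpha}$ one may replace the test field $\bm\phi\in\Hpara{}$ by $\bm\phi-\bm\phi_0$ with elementwise averages $\bm\phi_0$, use the same Bramble--Hilbert bound $\nLt{\h^{-1/2}(\bm\phi-\bm\phi_0)}{\Gamma}\preceq\nHpara{}{\bm\phi}$ as in the $\div\Psiv$ estimate, and conclude with the plain elementwise inverse estimate $\nLt{\curlv\alpha}{\ka}\preceq h_\ka^{-1}\nlt{\alpha}{\ka}\preceq\nlt{\Lcres}{\ka}$, giving $\nLt{\h^{1/2}\curlv\alpha}{\Gamma}\preceq\nlt{\h^{1/2}\LcresG}{\Gamma}$. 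Your two substitutes are both viable but heavier: route (a) imports a global $H^{1/2}$ inverse inequality for continuous piecewise polynomials on shape-regular, possibly graded, meshes --- a correct but nontrivial external result the paper never invokes (and which you rightly flag as the delicate point); route (b), elementwise integration by parts with the $H^{1/2}_{00}(\ka)$-scaling $\|\alpha_\ka\|_{H^{1/2}_{00}(\ka)}\approx h_\ka^{1/2}\nlt{\Lcres}{\ka}$, is rigorous and is essentially the mean-zero/Bramble--Hilbert localization phrased dually, so it buys independence from any inverse-estimate literature at the cost of a little interpolation-space bookkeeping. The only cosmetic caveat, shared with the paper's own proof of the vector case, is that for complex-valued residuals one should take $\alpha|_\ka=h_\ka\overline{\Lcres}\,b_\ka$ (or conjugate appropriately in the pairing) so that $\int_\Gamma\LcresG\,\alpha\ds=\nlt{\h^{1/2}\LcresG}{\Gamma}^2$; also, the Poincar\'e/seminorm step in route (a) is unnecessary, since keeping the full $\H{\frac12}$-norm of $\alpha$ is harmless.
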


\section{Conclusions}
\label{sec:conc}
In this paper we develop 
the first a posteriori error estimates for the electric field 
integral equation on polyhedra. We choose, for simplicity, to derive
residual based error estimates but believe that our theory extends to
other non-residual estimators.
We also choose to develop the theory for polyhedra, the most
interesting and useful case in practice, but we expect the results to
extend to smooth surfaces.
For scattering problems on polyhedra, the solution $\u$ of the
integral equation, or surface current, is not smooth whereas the
regularity of the right-hand side $\f$ is dictated by 
the surface $\Gamma$ because the incident wave is always smooth. This justifies 
our additional regularity assumption \eqref{assump-f} which, coupled 
with the properties $\grad(\Vk \div \uh)\in\L^2(\Gamma),\curls(\Ak
\uh)\in L^2(\Gamma)$, allows us to evaluate the residuals $\Res,\cres$
of \eqref{residuals} in $L^2(\Gamma)$ and thus avoid dealing with
fractional Sobolev norms.
We derive computable
global upper and lower a posteriori bounds for the estimator (up to oscillation terms).
In contrast to PDE, the estimator is global and due to the presence of the
potentials $\Vk,\Ak$ in the definition of the sesquilinear form.
However, the residuals $\Res,\cres$ being evaluated in $L^2(\Gamma)$
can be split elementwise and used to drive an adaptive boundary
element method (ABEM). The actual implementation of ABEM for EFIE is
rather delicate and is not part of the current discussion, which focusses
on the derivation and properties of the estimators.

\bibliographystyle{abbrv}
\bibliography{biblio,bibliospec}
\end{document}